\DeclareMathOperator*{\argmax}{arg\,max}
\DeclareMathOperator*{\argmin}{arg\,min}
\begin{document}

\numberwithin{equation}{section}

\theoremstyle{plain}
\newtheorem{theorem}{Theorem}[section]
\newtheorem{lemma}[theorem]{Lemma}
\newtheorem{proposition}[theorem]{Proposition}
\newtheorem{corollary}[theorem]{Corollary}
\newtheorem{assumption}[theorem]{Assumptions}
\newtheorem{mainAssumption}[theorem]{Definition}
\newtheorem{reducedAssumption}[theorem]{Definition}

\theoremstyle{definition}
\newtheorem{remark}[theorem]{Remark}
\newtheorem{definition}[theorem]{Definition}
\newtheorem{notation}[theorem]{Notation}

\newcommand{\keywords}[1]{\par\addvspace\baselineskip\noindent\textbf{Keywords:}\enspace\ignorespaces#1}
\newcommand{\AMSclassification}[1]{\par\addvspace\baselineskip\noindent\textbf{2010 Mathematics Subject Classification:}\enspace\ignorespaces#1}

\newcommand{\osc}{\text{\rm var}}
\newcommand{\supp}{\text{\rm supp}}
\newcommand{\card}{\text{\rm card}}

\title{Zero-temperature phase diagram for double-well type potentials in the summable variation class}
\author{
\small{Rodrigo Bissacot}\thanks{Supported by CNPq grant 308583/2012-4 and FAPESP grant 11/16265-8.
Supported also by EU Marie-Curie IRSES Brazilian-European partnership in Dynamical Systems (FP7-PEOPLE-2012-IRSES 318999 BREUDS).} \\	 
\footnotesize{Department of Applied Mathematics}\\
\footnotesize{University of Sao Paulo}\\
\footnotesize{05508-090 - Sao Paulo, Brazil}\\
\footnotesize{\texttt{rodrigo.bissacot@gmail.com}}
\and
\small{Eduardo Garibaldi}\thanks{Supported by CNPq Universal 476562/2013-9, CNPq grant 308593/2014-6 and VRERI-UNICAMP.}\\
\footnotesize{Department of Mathematics}\\
\footnotesize{University of Campinas}\\
\footnotesize{13083-859 Campinas, Brazil}\\
\footnotesize{\texttt{garibaldi@ime.unicamp.br}}
\and
\small{Philippe Thieullen}\thanks{Supported by FAPESP 15/10398-7. 
Supported also by EU Marie-Curie IRSES Brazilian-European partnership in Dynamical Systems (FP7-PEOPLE-2012-IRSES 318999 BREUDS).}\\
\footnotesize{Institut de Math\'ematiques}\\
\footnotesize{Universit\'e de  Bordeaux, CNRS, UMR 5251}\\
\footnotesize{F-33405 Talence, France}\\
\footnotesize{\texttt{Philippe.Thieullen@math.u-bordeaux1.fr}}
}
\date{\today}

\maketitle

\begin{abstract}
We study the zero-temperature limit of the Gibbs measures of a class of long-range potentials on a full shift of two symbols $\{0,1\}$. 
These potentials were introduced by Walters as a natural space for the transfer operator. 
In our case, they are constant on a countable infinity of cylinders, and Lipschitz continuous or, more generally, of summable variation. 
We assume there exists exactly two ground states: the fixed points $0^\infty$ and $1^\infty$. 
We fully characterize, in terms of the Peierls barrier between the two ground states, the zero-temperature phase diagram of such potentials,
that is, the regions of convergence or divergence of the Gibbs measures as the temperature goes to zero.
\end{abstract}

\section{Introduction and main results}

We consider the problem of convergence or divergence of Gibbs measures as the absolute temperature goes to zero. 
By a Gibbs measure, we mean an invariant probability $\mu_\beta$ describing the equilibrium at temperature $\beta^{-1}$  of one-sided 
configurations $(x_0,x_1,\ldots) \in \Sigma := \{0,1\}^{\mathbb{N}}$ interacting according to a potential $H : \Sigma \to \mathbb{R}$ as described in the thermodynamic formalism (see \cite{Baladi2000_01,Keller,ParryPollicott1990_01,Ruelle}). 
The invariance of the measure is defined with respect to the left shift $\sigma : \Sigma \to \Sigma$,
$ \sigma(x_0, x_1, \ldots) = (x_1, x_2, \ldots) $. 
We assume in the following that $H$ is nonnegative, Lipschitz continuous, or more generally of summable variation. When $\beta \to +\infty$, the Gibbs measures tend 
to concentrate on the minima of $H$. Besides, the limit measure needs to be invariant. We assume that  the only invariant ergodic probability measures included in the zero-level set $\{H=0\}$ are exactly the two Dirac measures $\delta_{0^\infty}$ and  $\delta_{1^\infty}$. 
As the temperature goes to zero ($\beta \to +\infty$), two cases may happen, either the {\it selection case} where $\mu_\beta$ converges to a convex combination  $c_0\delta_{0^\infty} + c_1\delta_{1^\infty}$, or the {\it nonselection case} where, for some subsequence $\beta_k$,
$ \{\mu_{\beta_k}\} $ has two accumulation points: $\mu_{\beta_{2k}} \to \delta_{0^\infty}$ and $\mu_{\beta_{2k+1}} \to \delta_{1^\infty}$. We  consider in this work the smallest class of potentials where the two cases coexist. 

For potentials that depend on a finite number of coordinates, namely, that are constant on a finite number of cylinder sets, the selection case always holds, over both finite alphabets
\cite{Bremont2003, Leplaideur2005, ChazottesGambaudoUgalde2010, GaribaldiThieullen2012} and countably infinite alphabets \cite{Kempton, FreireVargas}. 
For potentials that are constant on a countable infinity of cylinders, the selection case has been proved in particular examples: see Baraviera, Leplaideur, Lopes in~\cite{BaravieraLeplaideurLopes2012}, Leplaideur in~\cite{Leplaideur2012}, Baraviera, Lopes, Mengue in~\cite{BaravieraLopesMengue2013_01}. The nonselection case has been addressed more recently in \cite{VanEnterRusze2007l}, \cite{ChazottesHochman2010} and \cite{CoronelRiveraLetelier2014}. In a seminal paper~\cite{VanEnterRusze2007l}, van Enter and Ruszel have produced  an example 
where {\it chaotic temperature dependence} was observed, however their alphabet is the unit circle and the construction
is only based on properties of the potential and not on the dynamics. 
Chazottes and Hochman gave in \cite{ChazottesHochman2010} examples of nonselection in any dimension $D\not=2$ (with respect to an underlying 
$\mathbb Z^D$-action).
In one dimension, their potential is equal to the distance to some invariant compact set that has a complex 
combinatorial construction. In dimension $ D \ge 3 $, their nonselection examples come from potentials that do depend on a finite number of coordinates. Recently in~\cite{AubrunSablik}, Aubrun and Sablik extended \cite{Hochman2009}, which is the main ingredient 
in the proof of the multidimensional part of~\cite{ChazottesHochman2010}. In principle, an analogous proof of the nonselection 
for $D=2$ should also work. 
In \cite{CoronelRiveraLetelier2014}, Coronel and Rivera-Letelier adapted for finite alphabets van Enter and  Ruszel ideas and
they ensure the existence of nonselection examples by a perturbative approach combined with entropy arguments 
as in~\cite{ChazottesHochman2010}. Moreover, they were able to verify the nonselection case also for $D=2$, without using 
the result of~\cite{AubrunSablik}, but with Lipschitz continuous potentials. Thus, for potentials 
that depend on a finite number of coordinates in dimension $ D = 2 $,  it is an open question whether there exist examples of nonselection.

Our approach is different. We highlight the simplest class of potentials 
whose zero-temperature phase diagram is completely understood: it contains both the nonselection and the selection cases, with an 
explicit description of the limit measures in the convergent situation. 
We show that the criterion of nonselection or selection is given by the fact that the Peierls barriers between the two 
configurations $0^\infty$ and $1^\infty$ are both equal to zero or not.

We now detail such a class of potentials.
A cylinder of length $n\geq1$ is a set $C_n := [i_0i_1\ldots i_{n-1}]$ of configurations $x\in\Sigma$ such that the first 
$n$ states $x_0,x_1,\ldots,x_{n-1}$ coincide with $i_0,i_1,\ldots,i_{n-1}$. We say that two points $x,y \in \Sigma$ are $n$-close, 
and we write $x \stackrel{n}{=} y$, if $x$ and $y$ belong to the same cylinder of length $n$.
Let $H : \Sigma \to \mathbb{R}$ be a $C^0$ nonnegative potential. We say that $H$ has summable variation if 
\begin{equation}
\sum_{n\geq1} \osc(H,n) < +\infty,  \quad \text{ with } \, \osc(H,n) := \sup \big \{ | H(x) - H(y)| \,:\, x \stackrel{n}{=} y \big \}.
\end{equation}
We restrict the potential $H$ to a subclass of functions that are constant on a countable infinity of cylinders as described in the following assumptions. Our subclass  is a particular class of Walters potentials with summable variation (see \cite{Walters2007_01}).

\begin{mainAssumption} \label{assumption:locallyConstant}
We say that $H$ is a double-well type potential if $H$ is nonnegative, has summable variation and is constant on the cylinders $[00^n1]$, $[01^n0]$, $[11^n0]$ and $[10^n1]$. More precisely, there are nonnegative sequences $ \{a_n^0\} $, $ \{a_n^1\} $ and strictly positive 
sequences $ \{b_n^0\} $, $ \{b_n^1\} $ such that
\begin{enumerate}
\item \label{assumption:locallyConstant_1}  $H(x)=a_n^0 \geq 0$, \  if  $x \in [00^{n}1]$, \quad $H(x)=a_n^1 \geq 0$, \  if $ x \in [11^{n}0]$;

\item \label{assumption:locallyConstant_2}  $H(x) = b_n^0 > 0$, \ if $x \in [01^n0]$, \quad   $H(x)=b_n^1 > 0$, \ if $x\in [10^n1]$;

\item \label{assumption:locallyConstant_3} $\sum_{n\geq1} na_n^{0} < + \infty, \quad \sum_{n\geq1} n a_n^{1} < +\infty$;

\item \label{assumption:locallyConstant_4} $\sum_{k\geq1}\sup_{n\geq0}|b_k^0 - b_{k+n}^0| < +\infty$, 
\quad  $\sum_{k\geq1} \sup_{n\geq0}|b_k^1 - b_{k+n}^1| < +\infty$.
\end{enumerate}
Denote 
\begin{align*}
&H_{min}^0 := \inf_{n\geq1} \Big\{ b_n^0 + \sum_{k=1}^{n-1}a_k^1 \Big\}, \qquad 
H_\infty^0 := \lim_{n\to+\infty} b_n^0 + \sum_{n\geq1} a_n^1, \\
&H_{min}^1 := \inf_{n\geq1} \Big\{ b_n^1 + \sum_{k=1}^{n-1}a_k^0 \Big\}, 
\qquad H_\infty^1 := \lim_{n\to+\infty}  b_n^1 + \sum_{n\geq1} a_n^0.
\end{align*}
\end{mainAssumption}

As example of a double-well type potential, consider $ H : \Sigma \to [0, +\infty) $ given by $ H(0^\infty) = 0 = H(1^\infty) $
and $ H(x) = \rho_0^{\theta_0(x)} \rho_1^{\theta_1(x)} $ if $ x $ is not a fixed point, where $ \rho_0, \rho_1 \in (0,1) $
and $ \theta_0, \theta_1 \ge 1 $ are functions such that their restrictions $ \theta_0 |_{[1]} $, $ \theta_1 |_{[0]} $, 
$ \theta_0 |_{[0^n 1]} $, $ \theta_1 |_{[1^n 0]} $ are identically constant and satisfy 
$ \inf_{n \ge 1} \{ \theta_0 |_{[0^{n+1} 1]} - \theta_0 |_{[0^n 1]}, \theta_1 |_{[1^{n+1} 0]} - \theta_1 |_{[1^n 0]} \} > 0 $.
For this particular example, Gibbs measures do converge when the system is frozen as follows from our main result. 

Our main theorem describes the zero-temperature phase diagram of double-well type potentials (see figure~\ref{figure:diagram}). 
The different regions of the diagram  are described by a unique parameter, 
obtained by taking the minimum of three exponents:
\begin{equation}
\gamma := \min \Big\{\frac{1}{2} \big( H_\infty^1 + H_\infty^0 \big) , \, H_{min}^0 + H_\infty^1, \, H_{min}^1 + H_\infty^0\Big\}. \label{equation:threeExponents}
\end{equation}
Notice that $\gamma=0$ if, and only if, $H_\infty^0 = H_\infty^1=0$ if, and only if, the three exponents coincide. By symmetry we may assume $H_\infty^0 \leq H_\infty^1$. We  state the theorem in this case. If $\gamma > 0$, one exponent is irrelevant and we have: 
\[
\gamma = \min \Big\{\frac{1}{2} \big( H_\infty^1 + H_\infty^0 \big) , \, H_{min}^1 + H_\infty^0 \Big\},
\]
since $\frac{1}{2} ( H_\infty^1 + H_\infty^0)  <  H_{min}^0 + H_\infty^1$. We introduce in that case the coincidence number $\kappa$ which counts how many times the minimum is attained, that is, for $ H_n^1 := b_n^1 + \sum_{k=1}^{n-1}a_k^0 $,
\begin{equation}
   \kappa := \card \Big\{ n \geq 1 \,:\,  \frac{1}{2} \big( H_\infty^1 + H_\infty^0 \big) =  H_n^1 + H_\infty^0 \Big\}, \label{equation:coincidenceNumber} 
\end{equation}
and a coefficient $c$,  the  largest solution of the equation $ X^2 = \kappa X+1$,
\begin{equation} 
c := \frac{\kappa + \sqrt{\kappa^2+4}}{2}. \label{equation:PuiseuxCoefficient}
\end{equation}
Our main theorem is thus stated as follows.

\begin{theorem} \label{theorem:main}
Let $H:\Sigma \to \mathbb{R}$ be a double-well type potential. Let $\mu_\beta$ be the Gibbs measure of $H$ at temperature $\beta^{-1}$. 
Assume that $H_\infty^0 \leq H_\infty^1$. 

\begin{enumerate}
\item \label{item:mainTheorem_0} If $ \frac{1}{2} ( H_\infty^1 + H_\infty^0) > H_{min}^1 + H_\infty^0$, \ then \  
$ \displaystyle{\lim_{\beta\to+\infty} \mu_\beta = \delta_{1^\infty}}$.
\item \label{item:mainTheorem_1}  If $ H_{min}^1 + H_\infty^0 \geq \frac{1}{2} ( H_\infty^1 + H_\infty^0) > 0$, \ then
\begin{equation}
\lim_{\beta \to + \infty }\mu_\beta = \frac{1}{1+c^2}\delta_{0^\infty} + \frac{c^2}{1+c^2} \delta_{1^\infty}. \label{equation:SelectionCase}
\end{equation}

\item \label{item:mainTheorem_2} If $H_\infty^0=H_\infty^1=0$, then there exists a particular choice of $b_n^0,b_n^1$ (necessarily $a_n^0=a_n^1=0$)  such that  $H$ is Lipschitz and $\mu_\beta$ does not converge. More precisely, there exists a sequence $\beta_k \to +\infty$ such that$\displaystyle{\lim_{k\to+\infty}\mu_{\beta_{2k}} = \delta_{0^\infty}}$ and 
$\displaystyle{\lim_{k\to+\infty} \mu_{\beta_{2k+1}} = \delta_{1^\infty}}$.
\end{enumerate}
(Items \ref{item:mainTheorem_0} and \ref{item:mainTheorem_1} correspond to $\gamma>0$; item \ref{item:mainTheorem_2} corresponds  to $\gamma=0$.)
\end{theorem}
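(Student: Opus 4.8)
The plan is to reduce the computation of $\mu_\beta$ to a one-dimensional renewal-type problem governed by a transfer operator, and then to carry out a Puiseux-type asymptotic expansion in the variable $e^{-\beta}$. First I would observe that, because $H$ is constant on the cylinders $[00^n1]$, $[01^n0]$, $[11^n0]$, $[10^n1]$, a configuration $x$ is encoded by the sequence of lengths of its maximal runs of $0$'s and $1$'s, and the Birkhoff sums $S_n H(x) = \sum_{k=0}^{n-1} H(\sigma^k x)$ decompose into blocks whose weights are exactly $\sum_{j} a_j^i$ (inside a run) and $b_n^i$ (at a transition). Thus the partition functions $Z_\beta$, and more generally the $\mu_\beta$-measure of a fixed cylinder, can be written as a convergent sum over admissible run-length patterns; the quantities $H_{min}^0, H_\infty^0, H_{min}^1, H_\infty^1$ are precisely the exponential rates controlling, respectively, the cheapest single excursion and the cost-per-unit-length of a long excursion from one ground state to the other. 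I would package this as a $2\times 2$ (or $4\times 4$, tracking both the current symbol and whether we have just switched) matrix $L_\beta$ whose entries are power series in $t := e^{-\beta}$ with leading exponents $H_{min}^i$ and whose "tails" have exponents $H_\infty^i$. The Gibbs weights of $\delta_{0^\infty}$ and $\delta_{1^\infty}$ are then read off from the dominant eigenvector of $L_\beta$ as $\beta \to +\infty$.

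With this reduction in hand, item \ref{item:mainTheorem_0} is the case where one ground state is asymptotically infinitely cheaper: if $\tfrac12(H_\infty^1+H_\infty^0) > H_{min}^1 + H_\infty^0$, then escaping from $1^\infty$ and returning is exponentially more likely (in $t$) than escaping from $0^\infty$, so the invariant vector concentrates entirely on the $1$-component and $\mu_\beta \to \delta_{1^\infty}$; this should follow from a direct comparison of the leading exponents of the entries of $L_\beta$, using the summability hypotheses (\ref{assumption:locallyConstant_3})--(\ref{assumption:locallyConstant_4}) to control the error terms uniformly. Item \ref{item:mainTheorem_1} is the delicate one. Here the two escape mechanisms balance at exponential order $t^{\gamma}$, $\gamma = \tfrac12(H_\infty^1+H_\infty^0)$, and the ratio $\mu_\beta[1^\infty]/\mu_\beta[0^\infty]$ converges to a finite nonzero constant. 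The point is that this constant is not simply $1$: the number $\kappa$ of indices $n$ realizing the minimum in \eqref{equation:coincidenceNumber} contributes a combinatorial multiplicity, and resumming the geometric-type series of alternating excursions produces the continued-fraction/quadratic relation $c^2 = \kappa c + 1$ for $c = \lim \mu_\beta[1^\infty]/\mu_\beta[0^\infty]^{1/1}$ — more precisely, the square root in \eqref{equation:PuiseuxCoefficient} is the Perron root of the $2\times2$ substitution matrix $\left(\begin{smallmatrix}\kappa & 1\\ 1 & 0\end{smallmatrix}\right)$ encoding how an excursion-weighted path from $0^\infty$ to itself decomposes through visits to $1^\infty$. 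Normalizing $\mu_\beta$ then yields the stated convex combination $\tfrac{1}{1+c^2}\delta_{0^\infty} + \tfrac{c^2}{1+c^2}\delta_{1^\infty}$.

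For item \ref{item:mainTheorem_2}, the requirement $H_\infty^0 = H_\infty^1 = 0$ forces $a_n^i \equiv 0$ and $b_n^i \to 0$, so the only freedom is in the two sequences $\{b_n^0\},\{b_n^1\}$, and one must \emph{engineer} them to produce oscillation. Following the mechanism of chaotic temperature dependence, I would choose $b_n^0$ and $b_n^1$ decaying along widely separated scales, so that along one sparse subsequence $\beta_{2k}$ the cost of long $1$-excursions dominates (pushing $\mu_\beta$ toward $\delta_{0^\infty}$) and along the interlaced subsequence $\beta_{2k+1}$ the roles reverse; the summable-variation condition (\ref{assumption:locallyConstant_4}) must be checked for the explicit choice, and monotonicity of $\{b_n^i\}$ makes $H$ Lipschitz. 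The main obstacle I anticipate is item \ref{item:mainTheorem_1}: extracting the \emph{exact} coefficient $c$, as opposed to merely showing convergence to \emph{some} convex combination, requires a careful Puiseux expansion of the dominant eigenvector of $L_\beta$ to the precise order at which the multiplicity $\kappa$ enters, and controlling that the contributions of the non-minimizing indices $n$ and of the variation tails are of strictly higher order in $t$ — this is where hypotheses (\ref{assumption:locallyConstant_3})--(\ref{assumption:locallyConstant_4}) and the strict inequality $\tfrac12(H_\infty^1+H_\infty^0) < H_{min}^0 + H_\infty^1$ do the essential work.
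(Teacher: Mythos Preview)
Your high-level strategy --- run-length encoding, renewal structure, Puiseux expansion, and the role of the coincidence number $\kappa$ --- is correct and matches the paper's philosophy; your observation that $c$ is the Perron root of $\bigl(\begin{smallmatrix}\kappa & 1\\1 & 0\end{smallmatrix}\bigr)$ is a valid reformulation of $c^2=\kappa c+1$. However, the concrete implementation differs, and your formulation has a gap that the paper's route avoids.

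The paper does not build a finite matrix $L_\beta$ with entries that are power series in $t=e^{-\beta}$. After reducing via a coboundary to a potential vanishing on $[00]\cup[11]$, it works with the \emph{scalar} Perron eigenvalue $\lambda_\beta$ of the transfer operator and the explicit series
\[
F_\beta^i(\lambda)=\sum_{k\ge1}\lambda^{-k}e^{-\beta H_k^i},\qquad \tilde F_\beta^i(\lambda)=\sum_{k\ge1}k\,\lambda^{-k}e^{-\beta H_k^i},
\]
deriving the characteristic equation $F_\beta^0(\lambda_\beta)F_\beta^1(\lambda_\beta)=1$ and the closed formula $\mu_\beta[0]/\mu_\beta[1]=\bigl(F_\beta^0\,\tilde F_\beta^1\bigr)\big/\bigl(F_\beta^1\,\tilde F_\beta^0\bigr)$ at $\lambda=\lambda_\beta$. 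The point your sketch glosses over is that there are \emph{two} coupled small parameters, $e^{-\beta}$ and $\lambda_\beta-1$: summing over unbounded run lengths introduces factors $(\lambda_\beta-1)^{-1}$ and $(\lambda_\beta-1)^{-2}$, and the quadratic for $c$ emerges in the paper from expanding the characteristic equation, giving $X_\beta^2\sim\kappa X_\beta+1$ with $X_\beta=(\lambda_\beta-1)e^{\beta\gamma}$, not from an \emph{a priori} substitution matrix. Your $L_\beta$ is never specified; since run lengths are unbounded, any honest matrix is infinite-dimensional, and collapsing it to $2\times2$ is exactly the step that forces $\lambda_\beta$ into the picture. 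Without this self-consistency, the assertion that comparing ``leading exponents of the entries'' settles item~1 and that a Puiseux expansion of the eigenvector yields item~2 remains heuristic. In the paper, item~1 is handled by a log-scale estimate of the four series above, while item~2 requires genuine equivalents $F_\beta^0\sim e^{-\beta H_\infty^0}/(\lambda_\beta-1)$, $\tilde F_\beta^i\sim e^{-\beta H_\infty^i}/(\lambda_\beta-1)^2$, etc., obtained by splitting each series at a $\beta$-dependent cutoff $N_\beta$ and using the summable-variation hypothesis through a lemma of the form $(H_n-H_\infty)\ln n\to0$.

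For item~3 your plan coincides with the paper's: take $b_n^0,b_n^1$ piecewise constant on alternating, rapidly growing index blocks and choose temperatures $\beta_k$ so that one side dominates alternately; the paper gives explicit sequences $p_k,q_k,\epsilon_k,\beta_k$ and verifies the oscillation of $\mu_\beta[0]/\mu_\beta[1]$ through the same $F,\tilde F$ machinery.
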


\begin{figure}[hbt]
\centering
\includegraphics[width=0.95\textwidth]{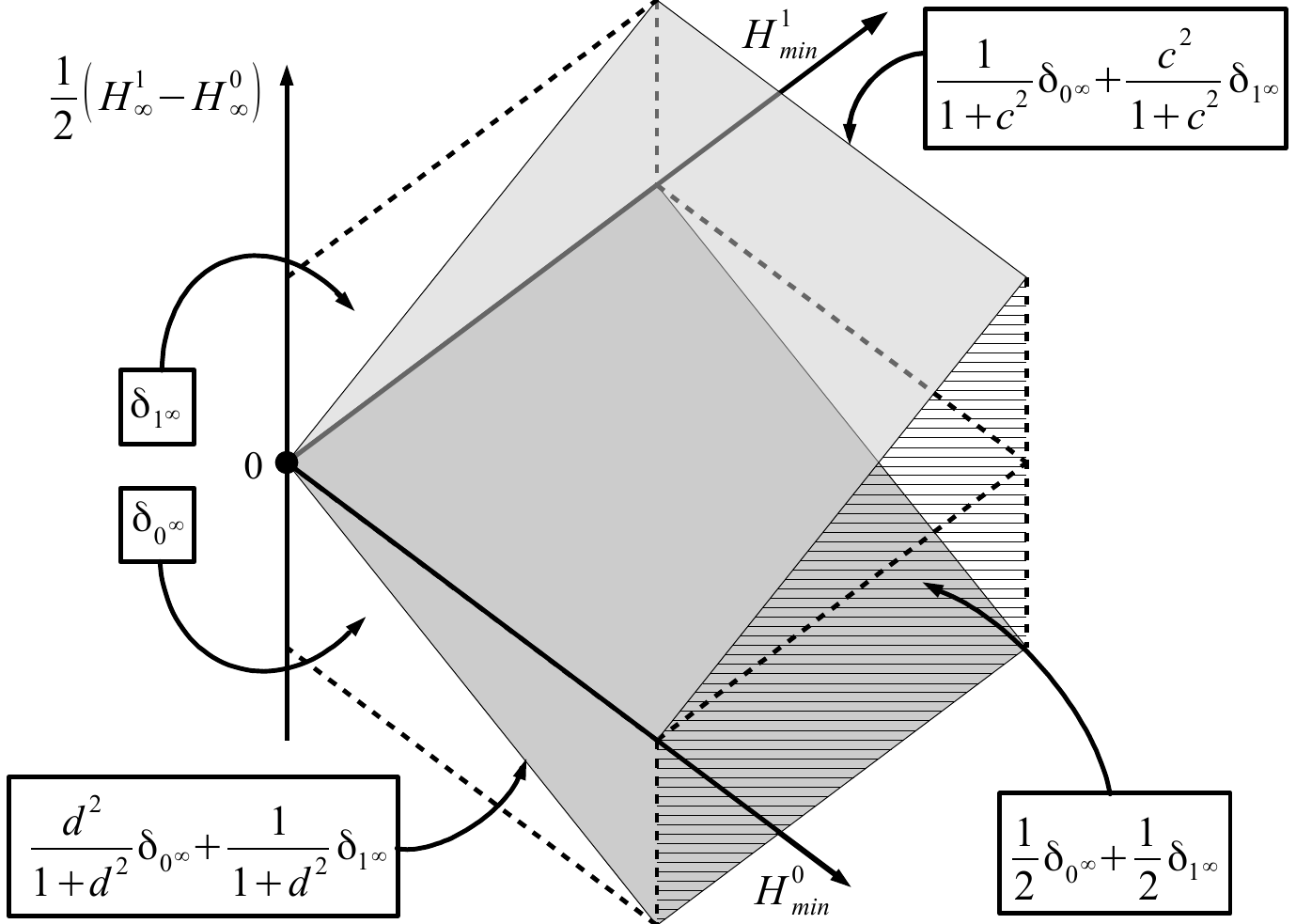}
\caption{\footnotesize{\textbf{Zero-temperature phase diagram.} The nonselection case can occur only at the origin. The formulas in the boxes
are the limit measures at zero temperature. The two gray planes correspond to the cases of the coincidence of two exponents. Outside these
planes the limit measures are barycenters with rational coefficients. If $ H_\infty^1 \ge H_\infty^0 $, then $ c $ is the coefficient
given by~\eqref{equation:PuiseuxCoefficient}. If $ H_\infty^0 \ge H_\infty^1 $, then $ d $ is the analogous coefficient.}}
\label{figure:diagram}
\end{figure}

In section~\ref{section:generalTheory}, we give general results for potentials of summable variation. In section~\ref{section:explicitFomulas}, for a double-well type potential $ H $, we compute the measure of every cylinder using two series that capture all the complexity of the limit. 
In section~\ref{section:selectionCase}, we prove the convergence of Gibbs measures when $\gamma>0$. Finally, 
in section~\ref{section:nonSelectionCase}, we provide examples of divergence with $\gamma=0$. Note that the symmetric case $a_n^0=a_n^1$ and $b_n^0=b_n^1$ gives in both cases $\gamma>0$ or $\gamma=0$ the convergence to $\frac{1}{2} \delta_{0^\infty} + \frac{1}{2} \delta_{1^\infty}$.

We also show in this particular class of potentials that the dichotomy selection/nonselection in theorem \ref{theorem:main} can be expressed in terms of the Peierls barrier between the two configurations $0^\infty$ and $1^\infty$. The Peierls barrier is defined for any potential with summable variation by
\begin{align*}
&h(x,y) := \lim_{p\to + \infty} \lim_{n\to+\infty} S_n^p(x,y), \quad \text{where}\\
&S_n^p(x,y) := \inf \Big\{ \sum_{i=0}^{k-1} \big[ H \circ \sigma^i(z)-\bar H \big] \,:\, k\geq n, \  z\in\Sigma, \  z \stackrel{p}{=} x, \ \sigma^n(z) \stackrel{p}{=} y  \Big\}, \\
& \bar H := \lim_{n\to+\infty} \ \inf \ \Big\{ \frac{1}{n} \sum_{k=0}^{n-1}H \circ \sigma^k(x) \,:\, x\in\Sigma \Big\}.
\end{align*}
The Peierls barrier indicates the minimal algebraic cost from $x$ to $y$ using a normalized potential $H - \bar H$. 
In the particular case of double-well type potentials, we have the following result.

\begin{corollary}
Let $H$ be a double-well type potential. Then
\begin{enumerate}
\item $\frac{1}{2}(H_\infty^0 + H_\infty^1) = \frac{1}{2} \big( h(0^\infty,1^\infty)+ h(1^\infty, 0^\infty) \big)$;
\item $H_{min}^0+H_\infty^1 = \liminf_{x \to 0^\infty} h(x,0^\infty)$;
\item $H_{min}^1+H_\infty^0 = \liminf_{x \to 1^\infty} h(x,1^\infty)$;
\item the nonselection happens if, and only if, $h(0^\infty,1^\infty)= h(1^\infty,0^\infty) = 0 $.
\end{enumerate}
\end{corollary}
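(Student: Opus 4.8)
The strategy is to compute the four quantities $h(0^\infty,1^\infty)$, $h(1^\infty,0^\infty)$, $\liminf_{x\to0^\infty}h(x,0^\infty)$ and $\liminf_{x\to1^\infty}h(x,1^\infty)$ explicitly, and then read off items~(1)--(4). Throughout I would use that for a double-well type potential $\bar H=0$: indeed $H\ge0$ and $H(0^\infty)=\lim_n a_n^0=0$ by Assumption~\ref{assumption:locallyConstant}.\ref{assumption:locallyConstant_3}, so the orbit of $0^\infty$ already realizes average $0$. Hence $S_n^p(x,y)=\inf\bigl\{\sum_{i=0}^{n-1}H\circ\sigma^i(z):z\stackrel{p}{=}x,\ \sigma^n(z)\stackrel{p}{=}y\bigr\}$, the infimum over $k\ge n$ being attained at $k=n$ because $H\ge0$. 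Two continuity facts are also needed: $H(01^\infty)=b_\infty^0:=\lim_n b_n^0$ and $H(10^\infty)=b_\infty^1:=\lim_n b_n^1$, the limits existing by Assumption~\ref{assumption:locallyConstant}.\ref{assumption:locallyConstant_4}; with this notation $H_\infty^0=b_\infty^0+\sum_{n\ge1}a_n^1$ and $H_\infty^1=b_\infty^1+\sum_{n\ge1}a_n^0$.

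The computational engine is a block decomposition of Birkhoff sums, which I would state as a short lemma. Every $z\in\Sigma$ is a concatenation of maximal monochromatic blocks; the contribution to $\sum_{i\ge0}H\circ\sigma^i(z)$ of the indices lying in a finite $0$-block of length $\ell$ followed by a $1$-block of length $m\in\{1,2,\dots\}\cup\{\infty\}$ equals exactly $\sum_{j=1}^{\ell-1}a_j^0+b_m^0$ (with $b_\infty^0=\lim_n b_n^0$ when that next block is $1^\infty$), symmetrically for finite $1$-blocks, while a terminal infinite block contributes $0$. Writing $H_n^0:=b_n^0+\sum_{k=1}^{n-1}a_k^1\ge H_{min}^0$ and $H_n^1:=b_n^1+\sum_{k=1}^{n-1}a_k^0\ge H_{min}^1$ and regrouping each transition cost with the block it leads into, a path $0^{n_0}1^{n_1}0^{n_2}\cdots$ costs $\sum_{j=1}^{n_0-1}a_j^0$ plus a sum of nonnegative terms $H_{n_i}^{\bullet}$ plus one leftover transition term. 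For a lower bound one may therefore discard all the $H_{n_i}^{\bullet}$ except those forced by the endpoints, keeping only $\sum_{j=1}^{n_0-1}a_j^0$ and the transition into the \emph{last} relevant block, which has length $\ge p$.

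With the lemma in hand the computations run as follows. For $h(0^\infty,1^\infty)$ the path $z=0^p1^\infty$ is feasible for $S_n^p$ when $n\ge p$ and costs $\sum_{j=1}^{p-1}a_j^0+b_\infty^0$; conversely any feasible $z$ begins with a $0$-block of length $n_0\ge p$ and reaches, at position $n$, a $1$-block of length $\ell\ge p$, so its cost is at least $\sum_{j=1}^{p-1}a_j^0+b_\ell^0\ge\sum_{j=1}^{p-1}a_j^0+\inf_{\ell\ge p}b_\ell^0$. Letting $n\to\infty$, then $p\to\infty$, and using $\inf_{\ell\ge p}b_\ell^0\to b_\infty^0$ gives $h(0^\infty,1^\infty)=b_\infty^0+\sum_{n\ge1}a_n^0$; by the $0\leftrightarrow1$ symmetry $h(1^\infty,0^\infty)=b_\infty^1+\sum_{n\ge1}a_n^1$, and adding these yields item~(1), since $\tfrac12\bigl(b_\infty^0+b_\infty^1+\sum_{n\ge1}a_n^0+\sum_{n\ge1}a_n^1\bigr)=\tfrac12(H_\infty^0+H_\infty^1)$. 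For item~(2), applying the decomposition to $z=x=0^N1^M0^\infty$ (the optimal competitor once $p>N+M$) gives $h(0^N1^M0^\infty,0^\infty)=\sum_{j=1}^{N-1}a_j^0+H_M^0+b_\infty^1$; choosing $N=N_k\to\infty$ and $M=M_k$ with $H_{M_k}^0\to H_{min}^0$ shows $\liminf_{x\to0^\infty}h(x,0^\infty)\le H_{min}^0+H_\infty^1$. For the reverse inequality, any $x\ne0^\infty$ close to $0^\infty$ is $0^N1y$ with $N\to\infty$, and for $p>N$ every feasible $z$ must begin with $0^N$ and must leave and re-enter the $0$-phase before the terminal $0$-block of length $\ge p$, hence (by the lemma) costs at least $\sum_{j=1}^{N-1}a_j^0+H_{min}^0+b_\ell^1$ with $\ell\ge p$; thus $h(x,0^\infty)\ge\sum_{j=1}^{N-1}a_j^0+H_{min}^0+b_\infty^1$ and $N\to\infty$ gives $\liminf_{x\to0^\infty}h(x,0^\infty)\ge H_{min}^0+H_\infty^1$. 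Item~(3) is item~(2) under $0\leftrightarrow1$. Finally item~(4) follows from Theorem~\ref{theorem:main}: nonselection is equivalent to $\gamma=0$, i.e.\ to $H_\infty^0=H_\infty^1=0$; since $a_n^0,a_n^1\ge0$ and $b_\infty^0,b_\infty^1\ge0$, this holds iff $b_\infty^0=b_\infty^1=0$ and $a_n^0=a_n^1=0$ for every $n$, which by the formulas $h(0^\infty,1^\infty)=b_\infty^0+\sum_{n\ge1}a_n^0$ and $h(1^\infty,0^\infty)=b_\infty^1+\sum_{n\ge1}a_n^1$ established above means precisely $h(0^\infty,1^\infty)=h(1^\infty,0^\infty)=0$.

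The main obstacle is the lower-bound direction: one must rule out that some elaborate multi-block path beats the obvious single-interface path. This is exactly what the nonnegativity of every $H_n^{\bullet}$ in the block decomposition buys, together with the convergence of $(b_n^\bullet)$ from Assumption~\ref{assumption:locallyConstant}.\ref{assumption:locallyConstant_4}, which forces the ``boundary'' transition cost $\inf_{\ell\ge p}b_\ell^\bullet$ to tend to $b_\infty^\bullet$ as $p\to\infty$. One must also be careful with the order of limits defining $h$: $x$ is fixed before $p\to\infty$, so for large $p$ a feasible path is pinned to a long prefix of $x$ and cannot avoid paying for the first $1$ in $x$.
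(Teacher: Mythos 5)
Your proof is correct, and the underlying mechanism is the same as the paper's: an enumeration of the block structure $0^{m_1}1^{n_1}0^{m_2}\cdots$ of any orbit segment connecting a $p$-neighborhood of one fixed point to a $p$-neighborhood of the other, a lower bound obtained by discarding all intermediate nonnegative block costs except the ones forced by the endpoints, and an upper bound from the single-interface competitor. The one real difference is organizational: the paper first replaces $H$ by the cohomologous reduced potential $\tilde H=H-(V\circ\sigma-V)$ (Lemma~\ref{lemma:CohomologousPotentials}), proves the Peierls-barrier formulas only for reduced potentials, and leaves implicit how those formulas transfer back to the original $H$ (under the coboundary, $h$ changes by $V(x)-V(y)$, which is why only the \emph{average} $\tfrac12(h(0^\infty,1^\infty)+h(1^\infty,0^\infty))$ and the liminfs at the fixed points are cohomology-independent). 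You instead compute $h$ directly for the unreduced potential, performing the regrouping $b_n^\bullet+\sum_{k<n}a_k^{1-\bullet}=H_n^\bullet$ inside the Birkhoff sums; this makes the appearance of the average in item~(1) transparent ($h(0^\infty,1^\infty)=b_\infty^0+\sum a_n^0$ alone need not equal $H_\infty^0$) and closes the small gap the paper glosses over. Two cosmetic caveats: your parenthetical claim that the infimum over $k\ge n$ in $S_n^p$ is ``attained at $k=n$'' is not literally true (the feasibility constraint $\sigma^k(z)\stackrel{p}{=}y$ depends on $k$), but your bounds never use it; and in item~(4), ``nonselection happens'' must be read, as you do, as ``nonselection is possible,'' since the symmetric examples with $\gamma=0$ still select.
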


Note that $\gamma$ may be seen as the minimum of three energy barriers: 
$\frac{1}{2} \big( H_\infty^0 + H_\infty^1 \big)$, the mean energy barrier of a cycle of second order 
between the two ground states $0^\infty$ and $1^\infty$; 
$H_{min}^0+H_\infty^1$, the energy barrier of a cycle of first order at $0^\infty$; 
and $H_{min}^1+H_\infty^0$, a similar energy barrier at $1^\infty$.

\section{Basic facts for potentials of summable variation}
\label{section:generalTheory}

We gather in this section some of the main elements of ergodic optimization theory for potentials of summable variation.
Ergodic optimization may be seen as a counterpart at zero temperature of thermodynamic formalism. 
A useful viewpoint on ergodic optimization is provided by Aubry-Mather theory. 
For more information, we refer the reader, for instance, to \cite{GLT, GaribaldiThieullen2012} and the references therein.

\begin{definition} \label{definition:minimizingMeasure}
For $H \in C^0(\Sigma)$, a minimizing measure $\mu_{min}$ is a $\sigma$-invariant probability such that
\begin{gather*}
\int\! H \,d\mu_{min} = \min \Big\{ \int\! H \,d\nu \,:\, \nu \text{ is a $\sigma$-invariant probability  measure} \Big\}.
\end{gather*}
We call Mather set of $H$ the invariant compact set
\begin{gather*}
\text{\rm Mather}(H) := \bigcup \, \{ \supp(\mu) \,:\, \mu \text{ is minimizing} \}.
\end{gather*}
We call minimizing ergodic value of $H$ the constant
\begin{gather*}
\bar H := \int\! H \,d\mu_{min}.
\end{gather*}
\end{definition}

We recall or extend basic results about the Peierls barrier for functions with summable variation.

\begin{proposition}\label{inclusao Mather}
If $H$ has summable variation, then
\begin{equation}\label{Mather incluso barreira}
\text{\rm Mather}(H) \subset \{ x \in \Sigma \,:\, h(x,x)=0 \}.
\end{equation}
\end{proposition}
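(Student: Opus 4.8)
The plan is to show that any point $x$ in the support of a minimizing measure $\mu_{min}$ satisfies $h(x,x) = 0$. The key is to produce, for fixed large $p$, long orbit segments that start and end $p$-close to $x$ and whose Birkhoff sum of $H - \bar H$ is arbitrarily small; taking $n \to \infty$ then $p \to \infty$ will give $h(x,x) \le 0$, and the reverse inequality $h(x,x) \ge 0$ comes from the fact that $\bar H$ is the minimizing ergodic average, so no orbit segment (indeed no invariant measure) can beat it.

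First I would establish the lower bound $h(x,x) \ge 0$. For any admissible $z$ with $z \stackrel{p}{=} x$ and $\sigma^n(z) \stackrel{p}{=} x$ and $k \ge n$, the sum $\sum_{i=0}^{k-1}[H\circ\sigma^i(z) - \bar H]$ can be controlled from below: one closes up the segment $z, \sigma(z), \ldots, \sigma^{k-1}(z)$ into a periodic orbit by modifying tails (using that the endpoints agree on $p$ coordinates), producing an invariant probability measure $\nu$ supported on that periodic orbit with $\int H\, d\nu \ge \bar H$; the summable variation of $H$ bounds the error between $\frac1k\sum_{i=0}^{k-1}H\circ\sigma^i(z)$ and $\int H \, d\nu$ by $\mathrm{var}(H,p)$-type terms (more precisely by a tail $\sum_{j \ge p}\osc(H,j)$), so $S_n^p(x,x) \ge -k\,\epsilon_p$ where $\epsilon_p \to 0$... — wait, this must be done carefully so the bound does not blow up with $k$; the standard trick is that the correction only affects coordinates near the splice point, giving an error independent of $k$, hence $\liminf_{n}S_n^p(x,x) \ge -\epsilon_p$ and $h(x,x)\ge 0$.

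For the upper bound $h(x,x) \le 0$, I would use the ergodic theorem. Since $x \in \supp(\mu_{min})$ and $\mu_{min}$ is minimizing, $\int H\,d\mu_{min} = \bar H$. Decompose $\mu_{min}$ into ergodic components; some ergodic component $\nu$ has $x$ near its support and $\int H\,d\nu = \bar H$ (here one uses that all ergodic components of a minimizing measure are themselves minimizing). By Birkhoff's theorem, $\nu$-a.e. point $y$ has $\frac1k\sum_{i=0}^{k-1}H\circ\sigma^i(y) \to \bar H$; picking such a generic $y$ that is $p$-close to $x$ and returns $p$-close to $x$ infinitely often (possible by the recurrence of $\nu$-generic points combined with $x \in \supp\nu$), and using summable variation to replace $y$ by a point $z$ genuinely equal to $x$ on the first $p$ coordinates and with $\sigma^n(z)$ equal to $x$ on $p$ coordinates, we get $S_n^p(x,x) \le \sum_{i=0}^{k-1}[H\circ\sigma^i(z)-\bar H] \le o(k) + (\text{tail of }\osc)$ along a subsequence of return times, so $\liminf_n S_n^p(x,x) \le \epsilon_p$ with $\epsilon_p\to 0$, giving $h(x,x)\le 0$.

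The main obstacle is the bookkeeping in the splicing/approximation step: one must pass between genuine orbit segments of a chosen generic point and the constrained competitors appearing in the definition of $S_n^p$, while keeping all errors bounded by a tail of $\sum_j \osc(H,j)$ that is uniform in the (long) length $k$ and vanishes as $p\to\infty$. The summable variation hypothesis is exactly what makes these corrections summable and $k$-independent; the key lemma to isolate is that changing finitely many coordinates of a configuration outside the first $p$ changes any Birkhoff sum $\sum_{i=0}^{k-1}H\circ\sigma^i$ by at most $2\sum_{j\ge p}\osc(H,j)$ (or a similar expression), independently of $k$. Once that estimate is in hand, combining the two bounds yields $h(x,x)=0$ for every $x\in\text{Mather}(H)$.
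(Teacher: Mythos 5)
Your lower bound $h(x,x)\ge 0$ is correct, and it takes a different route from the paper: you close each competitor segment into a periodic orbit and compare with $\bar H$, paying only a splicing error bounded by the tail $\sum_{j\ge p}\osc(H,j)$, uniformly in $k$. The paper instead derives this inequality from the existence of a continuous calibrated sub-action $V$ (so that $\sum_{i=0}^{k-1}[H\circ\sigma^i(z)-\bar H]\ge V(\sigma^k(z))-V(z)\ge-\osc(V,p)$). Both work; yours is more self-contained, the paper's reuses machinery it needs anyway.

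The upper bound, however, has a genuine gap: Birkhoff's theorem is not strong enough. For a $\nu$-generic $y$, Birkhoff gives only $\sum_{i=0}^{k-1}[H\circ\sigma^i(y)-\bar H]=o(k)$, and a quantity that is $o(k)$ may still tend to $+\infty$ along \emph{every} subsequence of return times (think of sums growing like $\sqrt{k}$ or $\log k$). Since $S_n^p(x,x)$ is an infimum over segments of length $k\ge n$, to get $\lim_n S_n^p(x,x)\le\epsilon_p$ you need, for arbitrarily large $k$, orbit segments returning to the cylinder of $x$ whose Birkhoff sum of $H-\bar H$ is within $\epsilon_p$ of $0$ — i.e.\ you need $\liminf_k\big|\sum_{i=0}^{k-1}[H\circ\sigma^i(y)-\bar H]\big|=0$ at return times to a positive-measure set. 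This is exactly Atkinson's theorem on recurrence of real-valued cocycles with zero mean over an ergodic base, which is the result the paper invokes at this point; it is a strictly stronger statement than the ergodic theorem (its proof goes through the induced system and Kac's formula, and the analogue fails for $\mathbb{R}^2$-valued cocycles). With Atkinson in place of Birkhoff, the rest of your argument (Poincaré recurrence to the cylinder $[x_0\ldots x_{p-1}]$, ergodic components of a minimizing measure being minimizing, and the summable-variation correction to force exact agreement with $x$ on $p$ coordinates) goes through.
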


The previous proposition follows from Atkinson's theorem \cite{Atkinson} and from the existence of a continuous calibrated sub-action.

\begin{definition} \label{definition:DiscreteLaxOleinik}
We call  Lax-Oleinik operator the nonlinear operator acting on continuous functions $V \in C^0(\Sigma)$ defined by
\begin{gather*}
T[V](y) := \min \{ V(x)+H(x) \,:\, x \in \Sigma, \ \sigma(x)=y \}, \quad \forall \, y \in \Sigma.
\end{gather*}
We call calibrated sub-action any continuous function  $V$ solution of the equation $T[V]=V+\bar H$. 
\end{definition}

Clearly, $ V \circ \sigma - V \le H - \bar H $ when $ V $ is a calibrated sub-action, which in particular ensures that $ h(x,x) \ge 0 $ for all
$ x \in \Sigma $. Atkinson's theorem provides the opposite inequality if $ x \in \text{\rm Mather}(H) $. These are the main ingredients of the
proof of proposition~\ref{inclusao Mather}.
To obtain a calibrated sub-action, we will introduce a stronger notion of regularity on $C^0(\Sigma)$. Consider thus
\begin{equation*}
\mathbb{K} := \Big\{ V  \in C^0(\Sigma) \,:\, \forall \, n\geq1, \  \osc(V,n) \leq \sum_{k\geq n+1} \osc(H,k) \Big\}.
\end{equation*}
We also recall that the transfer operator is defined on the space $C^0(\Sigma) $ by
\[
\mathcal{L}_\beta[\Phi] (x ) = e^{-\beta H(0x)} \Phi(0x) + e^{-\beta H(1x)} \Phi(1x), \qquad \forall \, x \in \Sigma.
\]
The next theorem contains a version of Ruelle-Perron-Frobenius theorem and provides a calibrated sub-action 
in the context of potentials with sum\-ma\-ble variation, making explicit well-known connections between thermodynamic formalism and
ergodic theory.

\begin{theorem} \label{theorem:RuelleOperator}
Let $H:\Sigma \to \mathbb{R}$ be a potential with summable variation. 
\begin{enumerate} 
\item The transfer operator admits a unique positive and continuous eigenfunction $\Phi_\beta$ satisfying $\max \Phi_\beta =1$, which is
associated with a positive eigenvalue $ \lambda_\beta $.

\item If $V_\beta := -\frac{1}{\beta} \ln \Phi_\beta$, then $V_\beta \in \mathbb{K}$ and $\min V_\beta =0$. 

\item The dual operator $\mathcal{L}_\beta^*$ admits a unique eigenprobability $\nu_\beta$. 
The corresponding eigenvalue is equal to $\lambda_\beta$, $\mathcal{L}_\beta^*[\nu_\beta] = \lambda_\beta \nu_\beta$. 

\item Define $\mu_\beta := {\Phi_\beta \nu_\beta}/{\int\! \Phi_\beta\, d\nu_\beta}$. Then $\mu_\beta$ is a $\sigma$-invariant probability measure, and any weak${^*}$ accumulation point of $\mu_\beta$ as $\beta \to +\infty$ is a minimizing measure.

\item There exists a sequence $\beta_k \to +\infty$ such that (in the sup-norm topology) $\{V_{\beta_k}\} $ converges to a function $ V_\infty \in \mathbb{K}$ with $\min V_\infty = 0$. Moreover, any accumulation function $V_\infty$ of $\{V_\beta\}$ as $\beta \to +\infty $ is a calibrated sub-action for $ H $.
\end{enumerate}
\end{theorem}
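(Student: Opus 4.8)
The plan is to build, for each fixed $\beta$, the eigendata of the transfer operator $\mathcal{L}_\beta$ by a projective-contraction argument adapted to the summable variation of $H$, and then to let $\beta\to+\infty$ and extract the calibrated sub-action by compactness. The key estimate is a bounded distortion property: writing $S_nH := \sum_{i=0}^{n-1} H\circ\sigma^i$ and noting that the $\sigma^n$-preimages of $x$ are the points $wx$ with $w\in\{0,1\}^n$, summable variation gives $|S_nH(wx) - S_nH(wx')| \le \sum_{k\ge m+1}\osc(H,k)$ whenever $x\stackrel{m}{=}x'$. Consequently the cone
\[
\mathcal{K}_\beta := \Big\{ \Phi \in C^0(\Sigma) : \Phi > 0, \ \Phi(x) \le e^{\beta \sum_{k\ge m+1}\osc(H,k)}\,\Phi(x') \ \text{whenever } x \stackrel{m}{=} x', \ \forall\, m \ge 0 \Big\}
\]
is invariant under $\mathcal{L}_\beta$ — one application of $\mathcal{L}_\beta$ exactly absorbs the extra level of oscillation produced by $H$ — and has finite projective (Hilbert) diameter, since any two of its elements are comparable with ratio at most $e^{2\beta C}$, where $C := \sum_{k\ge1}\osc(H,k)$. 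By Birkhoff's theorem $\mathcal{L}_\beta$ is then a strict contraction of the Hilbert metric on the projectivization of $\mathcal{K}_\beta$, which is complete; the unique fixed ray yields a positive continuous eigenfunction, unique up to scaling, which after normalization gives $\Phi_\beta$ with $\max\Phi_\beta = 1$ and $\mathcal{L}_\beta[\Phi_\beta] = \lambda_\beta\Phi_\beta$, $\lambda_\beta > 0$: this is item~1. Setting $V_\beta := -\frac{1}{\beta}\ln\Phi_\beta$, membership in $\mathcal{K}_\beta$ translates verbatim into $\osc(V_\beta,n) \le \sum_{k\ge n+1}\osc(H,k)$, i.e. $V_\beta\in\mathbb{K}$, while $\max\Phi_\beta = 1$ gives $\min V_\beta = 0$, proving item~2.

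For item~3, the affine map $\nu \mapsto \mathcal{L}_\beta^*[\nu]/\mathcal{L}_\beta^*[\nu](\mathbf{1})$ on the weak${}^*$-compact convex set of Borel probabilities on $\Sigma$ has a fixed point $\nu_\beta$ by the Schauder--Tychonoff theorem; testing the relation $\mathcal{L}_\beta^*[\nu_\beta] = c\,\nu_\beta$ against $\Phi_\beta$ and using $\int\mathcal{L}_\beta[\Phi_\beta]\,d\nu_\beta = \int\Phi_\beta\,d\mathcal{L}_\beta^*[\nu_\beta]$ forces $c = \lambda_\beta$. Uniqueness of $\nu_\beta$ follows from the contraction above: for any $\Phi\in\mathcal{K}_\beta$ the functions $\lambda_\beta^{-n}\mathcal{L}_\beta^n[\Phi]$ converge uniformly to a positive multiple of $\Phi_\beta$, so the value $\int\Phi\,d\nu$ is determined, up to a common normalization, for every eigenprobability $\nu$; alternatively one invokes the uniqueness of the equilibrium state for summable-variation potentials on the full shift. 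Item~4 is then a direct computation: for $f\in C^0(\Sigma)$ one checks $\mathcal{L}_\beta[(f\circ\sigma)\,\Phi_\beta] = \lambda_\beta\,f\,\Phi_\beta$, hence integrating against $\nu_\beta$ gives $\int (f\circ\sigma)\,\Phi_\beta\,d\nu_\beta = \int f\,\Phi_\beta\,d\nu_\beta$, which is exactly $\sigma$-invariance of $\mu_\beta := \Phi_\beta\nu_\beta / \int\Phi_\beta\,d\nu_\beta$.

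To finish item~4 and then do item~5, I would first pin down the asymptotics of $\lambda_\beta$. Iterating $\mathcal{L}_\beta[\Phi_\beta] = \lambda_\beta\Phi_\beta$ and using $e^{-\beta C}\le\Phi_\beta\le1$ gives $\lambda_\beta^n \asymp \sum_{w\in\{0,1\}^n}e^{-\beta S_nH(wx)}$; comparing $S_nH(wx)$ with $\inf_y S_nH(y)$ — trivially from below, and from above along a near-minimizing orbit using bounded distortion — and recalling that $\tfrac1n\inf_y S_nH(y)\to\bar H$ with $\inf_y S_nH(y)\le n\bar H$ by superadditivity, one obtains $-\beta\bar H \le \ln\lambda_\beta \le -\beta\bar H + \ln 2$ for every $\beta$. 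Combined with the entropy identity $h(\mu_\beta) = \beta\int H\,d\mu_\beta + \ln\lambda_\beta$ and $h(\mu_\beta)\le\ln2$, this forces $\bar H \le \int H\,d\mu_\beta \le \bar H + (\ln2)/\beta$; hence every weak${}^*$ accumulation point of $\{\mu_\beta\}$ is $\sigma$-invariant with $\int H\,d\mu = \bar H$, i.e. minimizing, which completes item~4. For item~5, the family $\{V_\beta\}\subset\mathbb{K}$ is uniformly bounded (the $m=0$ distortion bound gives $0\le V_\beta\le C$) and equicontinuous, so by Arzel\`a--Ascoli some subsequence $V_{\beta_k}\to V_\infty$ uniformly with $V_\infty\in\mathbb{K}$, $\min V_\infty=0$. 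Rewriting the eigenvalue equation as
\[
-\frac{1}{\beta}\ln\Big(e^{-\beta(H(0y)+V_\beta(0y))} + e^{-\beta(H(1y)+V_\beta(1y))}\Big) = V_\beta(y) - \frac{1}{\beta}\ln\lambda_\beta
\]
and letting $\beta\to+\infty$, the left side converges uniformly to $\min\{H(0y)+V_\infty(0y),\,H(1y)+V_\infty(1y)\} = T[V_\infty](y)$ (a soft-min of just two terms) while $-\frac{1}{\beta}\ln\lambda_\beta\to\bar H$, so $T[V_\infty] = V_\infty + \bar H$; since any accumulation function of $\{V_\beta\}$ arises this way, item~5 follows.

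The main obstacle is the two uniqueness assertions (in items~1 and~3): existence of the eigendata is soft (Schauder--Tychonoff, or even an averaged construction, would suffice), but uniqueness genuinely requires the Hilbert-metric contraction on the invariant cone, and it is precisely here that the global control afforded by summable variation — as opposed to mere continuity — is indispensable; verifying that the projectivized cone is complete and that $\mathcal{L}_\beta$ has finite projective oscillation on it is the technical heart of the argument. A secondary delicate point is the estimate $\ln\lambda_\beta = -\beta\bar H + O(1)$ together with the entropy formula used in item~4, which must be arranged without circularly invoking the calibrated sub-action produced in item~5.
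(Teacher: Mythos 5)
Your overall architecture is reasonable, and items 2--5 are handled in essentially the same spirit as the paper (which itself delegates most of items 1, 3 and 4 to standard references and only writes out the construction of the eigenfunction and the passage to the calibrated sub-action; your Arzel\`a--Ascoli extraction and the soft-min limit of the eigenvalue equation for item 5 match the paper's argument). The genuine divergence --- and the genuine gap --- is in your route to item 1. You propose a Birkhoff cone contraction on $\mathcal{K}_\beta:=\{\Phi>0:\ \Phi(x)\le e^{\beta\sum_{k\ge m+1}\osc(H,k)}\Phi(x')\ \text{if}\ x\stackrel{m}{=}x'\}$. The invariance computation is fine, but for Birkhoff's theorem to yield a strict contraction you must show that the image $\mathcal{L}_\beta(\mathcal{K}_\beta)$ has finite diameter in the Hilbert metric \emph{of the cone $\mathcal{K}_\beta$ itself}, i.e.\ with respect to the partial order $\Phi\preceq\Psi\Leftrightarrow\Psi-\Phi\in\mathcal{K}_\beta\cup\{0\}$. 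What you verify --- pointwise comparability of any two elements with ratio $e^{2\beta C}$ --- is finite diameter for the order of the ambient positive cone, a strictly weaker order; the resulting inequality bounds $d_{\mathcal{C}^+}(\mathcal{L}_\beta\phi,\mathcal{L}_\beta\psi)$ by a multiple of $d_{\mathcal{K}_\beta}(\phi,\psi)$ and cannot be iterated, since $d_{\mathcal{K}_\beta}\ge d_{\mathcal{C}^+}$. Worse, your cone constant is exactly reproduced, not strictly improved, by one application of $\mathcal{L}_\beta$ (as your own ``exactly absorbs'' remark shows), and in the summable-variation class the usual repair --- enlarging the constant so that the image lands strictly inside --- fails, because $\osc(H,m+1)$ may be negligible compared with $\sum_{k\ge m+2}\osc(H,k)$ (try $\osc(H,k)\sim k^{-2}$). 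You correctly identify this as ``the technical heart,'' but it is exactly the step that is missing, and for this regularity class it is not a routine verification. The uniqueness of $\nu_\beta$ in your item 3 leans on the same unproved contraction.

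The paper sidesteps all of this. Existence of $\Phi_\beta$: apply Schauder to the \emph{nonlinear} operator $T_\beta[u]=-\beta^{-1}\ln\mathcal{L}_\beta[e^{-\beta u}]$, renormalized at a base point, on the compact convex set $\mathbb{K}_0=\{U\in\mathbb{K}:U(x_0)=0\}$; the estimate $\osc(T_\beta[u],n)\le\osc(H,n+1)+\osc(u,n+1)$ (your cone computation, written additively) shows $\mathbb{K}_0$ is preserved, and a fixed point gives $\mathcal{L}_\beta[\Phi_\beta]=\lambda_\beta\Phi_\beta$ together with item 2 for free. Uniqueness: if $\tilde\Phi$ is another positive continuous eigenfunction, sandwiching $s\Phi_\beta\le\tilde\Phi\le t\Phi_\beta$ and iterating forces $\tilde\lambda=\lambda_\beta$; then, choosing $s$ with $\min(\tilde\Phi-s\Phi_\beta)=0$, the eigenvalue equation shows the zero set of $\tilde\Phi-s\Phi_\beta$ contains both preimages of each of its points, hence is dense in the full shift, hence is everything by continuity. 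I would replace your cone argument by this soft one (or invoke Walters' theorem for this class of potentials, which the paper cites). The rest of your proposal --- the pressure estimate $\ln\lambda_\beta=-\beta\bar H+O(1)$ combined with $h(\mu_\beta)\le\ln 2$ for item 4, and the uniform convergence of the soft-min for item 5 --- is correct, with the caveat that the entropy identity you use is the variational characterization of $\mu_\beta$ as equilibrium state and must itself be cited or proved for summable variations.
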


\begin{proof}
The proof of these results are standard (see \cite{Ruelle,ParryPollicott1990_01,GaribaldiThieullen2012}), 
and hence we focus on the part leading to the existence of calibrated sub-actions.
We define a nonlinear operator $T_\beta$ by
\begin{gather*}
T_\beta[u] := -\frac{1}{\beta}\ln \big( \mathcal{L}_\beta[\exp(-\beta u)] \big).
\end{gather*}
Fix $ x_0 \in \Sigma $ and define $\mathbb{K}_0 := \{ U \in \mathbb{K} \,:\, U(x_0)=0 \}$. The set $\mathbb{K}_0$ is closed in the $C^0(\Sigma)$ topology and bounded. By the unifom continuity of $\mathbb{K}$ and Arzel\`a-Ascoli theorem, the set $\mathbb{K}_0$ is compact. 
Besides, $\mathbb{K}_0$ is convex.

If $x \stackrel{n}{=} y$, then
\begin{gather*}
T_\beta[u](x) - T_\beta[u](y) \leq \osc(H,n+1) + \osc(u,n+1).
\end{gather*}
In particular $\osc(T_\beta[u],n) \leq  \osc(H,n+1) + \osc(u,n+1)$ and the map
\begin{gather*}
\tilde T_\beta[u] := T_\beta[u ] - T_\beta[u](x_0)
\end{gather*}
preserves $\mathbb{K}_0$. By Schauder theorem, $\tilde T_\beta$ admits a fixed point, or in an equivalent way, $T_\beta$ admits an additive eigenfunction $ T_\beta[U_\beta] = U_\beta + \bar H_\beta $, which yields
\begin{equation*}
\mathcal{L}_\beta[\Phi_\beta] = \lambda_\beta \Phi_\beta, \quad 
\text{with}\ \Phi_\beta := e^{-\beta (U_\beta-\min U_\beta)}, \ \lambda_\beta = e^{-\beta \bar{H}_\beta}.
\end{equation*}
Let  $\tilde \Phi$ be another positive and continuous eigenfunction associated with some positive eigenvalue $\tilde\lambda$. We choose $s,t>0$ such that $s\Phi_\beta \leq \tilde \Phi \leq t \Phi_\beta$. By iterating $\mathcal{L}_\beta$, we obtain $s \lambda_\beta^n \Phi_\beta \leq  \tilde\lambda^n \tilde\Phi \leq t \lambda_\beta^n \Phi_\beta$. Then $\tilde\lambda=\lambda_\beta$. Let $s$ be such that 
$\min(\tilde\Phi-s\Phi_\beta)=0$. Then the identity
\begin{gather*}
\mathcal{L}_\beta[\tilde\Phi -s \Phi_\beta] = \lambda_\beta(\tilde\Phi - s \Phi_\beta)
\end{gather*} 
implies that the set $\argmin_{x}  (\tilde\Phi - s\Phi_\beta )(x)$ is invariant by $\sigma^{-1}$ and therefore $\tilde\Phi = s \Phi_\beta$. The uniqueness of the eigenfunction is proved.

Note that the family $ \{V_\beta = -\frac{1}{\beta} \ln \Phi_\beta\}_{\beta>0} $ belongs to the compact subset 
$ \{ V \in \mathbb{K} \,:\, \min V = 0 \} $. Passing to the limit with respect to a suitable sequence $ \beta_k \to +\infty $,
we see that $ T[V_\infty] = V_\infty + c $ for $ c = \lim \bar H_{\beta_k} $. From min-plus algebra, it is well know that the only 
additive eigenvalue is $ c = \bar H $.
\end{proof}

The following proposition shows how calibrated sub-actions are related with the Peierls barrier.

\begin{proposition}\label{corollary:RepresentationFormula}
If $H$ has summable variation, then the following items hold. 
\begin{enumerate}
\item \label{corollary:RepresentationFormula:1} For every $x\in\text{Mather}(H)$, as a function of its second variable, 
$h(x,\cdot)$ belongs to $ \mathbb{K}$ and is a calibrated sub-action.
\item \label{corollary:RepresentationFormula:2} 
If $V \in C^0(\Sigma)$ is a calibrated sub-action, then $ V \in \mathbb K $
and $ V $ admits a representation formula\footnote{This representation is usually stated using the Aubry set instead of the Mather set.}
\begin{equation}\label{formula representacao}
V(y) = \min \big\{ V(x) + h(x,y) \,:\, x \in \text{\rm Mather}(H) \big\}, \qquad \forall \, y \in \Sigma. 
\end{equation}
\end{enumerate}
\end{proposition}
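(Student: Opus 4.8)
The plan is to exploit the two one-sided inequalities that define a calibrated sub-action and to match them against the definition of $h$. For item~\ref{corollary:RepresentationFormula:1}, fix $x \in \text{Mather}(H)$, so that $h(x,x)=0$ by Proposition~\ref{inclusao Mather} and Atkinson's theorem. I would first verify that $y \mapsto h(x,y)$ is well defined and finite: the quantities $S_n^p(x,y)$ are bounded below because any calibrated sub-action $V$ (which exists by Theorem~\ref{theorem:RuelleOperator}) satisfies $\sum_{i=0}^{k-1}[H\circ\sigma^i(z)-\bar H] \ge V(\sigma^k(z)) - V(z)$, and bounded above by choosing a convenient orbit segment from $x$ to $y$ and back near $x$, using $h(x,x)=0$. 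Next I would show $h(x,\cdot) \in \mathbb{K}$: an estimate of $\osc(S_n^p(x,\cdot),m)$ in terms of $\sum_{k\ge m+1}\osc(H,k)$ should follow by comparing two competitor orbits that agree up to coordinate $m$ after time $n$ and modifying the tail, paying only the summable-variation cost; passing to the limits in $n$ and $p$ preserves the bound. Finally, to see $h(x,\cdot)$ is a calibrated sub-action, I would prove the dynamic-programming identity $\min\{h(x,w)+H(w) : \sigma(w)=y\} = h(x,y)+\bar H$ directly from the infimum defining $S_n^p$, splitting an orbit from $x$ to $y$ at its penultimate point.

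For item~\ref{corollary:RepresentationFormula:2}, let $V$ be any calibrated sub-action. That $V \in \mathbb{K}$ is essentially part of Theorem~\ref{theorem:RuelleOperator}: iterating $T[V]=V+\bar H$ expresses $V$ as an infimum over preimage orbits, and the same summable-variation telescoping that controls $\mathbb{K}$ in the proof of that theorem gives $\osc(V,n)\le \sum_{k\ge n+1}\osc(H,k)$. For the representation formula, the inequality $V(y) \le V(x)+h(x,y)$ for every $x\in\text{Mather}(H)$ comes from iterating $V\circ\sigma - V \le H-\bar H$ along any orbit from $x$ to $y$ and taking the infimum, then the limits in $n,p$; here one only needs $V\circ\sigma-V\le H-\bar H$, not calibration. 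The reverse inequality is where calibration is used: since $T[V]=V+\bar H$, for each $y$ there is a preimage orbit $y=y_0, y_1, y_2,\dots$ with $V(y_{j-1}) = V(y_j)+H(y_j)-\bar H$ exactly, i.e. a backward calibrated orbit realizing equality in the telescoped sum. Following such an orbit backward, its $\omega$-limit (in backward time) accumulates on $\text{Mather}(H)$ — this is the standard consequence of the calibrated equality together with $\bar H$ being the minimizing ergodic value — so one extracts a point $x^\ast\in\text{Mather}(H)$ along which the accumulated cost is $V(y)-V(x^\ast)$ up to vanishing error, giving $V(y)\ge V(x^\ast)+h(x^\ast,y)$ and hence equality in the min.

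The main obstacle I expect is the reverse inequality in the representation formula: producing the point $x^\ast\in\text{Mather}(H)$ that realizes the minimum. This requires a careful compactness/accumulation argument showing that a backward calibrated orbit from $y$ must approach the Mather set, controlling simultaneously the two limits ($n\to\infty$ then $p\to\infty$) that define $h$, and checking that the uniform modulus of continuity coming from membership in $\mathbb{K}$ lets one pass the telescoped cost estimates to the limit without loss. The $\mathbb{K}$-membership claims and item~\ref{corollary:RepresentationFormula:1} are more routine: they are variations on the summable-variation bookkeeping already displayed in the proof of Theorem~\ref{theorem:RuelleOperator}, combined with Atkinson's theorem for the normalization $h(x,x)=0$.
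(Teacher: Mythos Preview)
Your approach is essentially the same as the paper's: backward calibrated orbits for the reverse inequality in~\eqref{formula representacao}, summable-variation telescoping for $\mathbb{K}$-membership, and Atkinson plus the dynamic-programming structure of $S_n^p$ for item~\ref{corollary:RepresentationFormula:1}.

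There is one genuine subtlety you gloss over, precisely at the step you flag as the main obstacle. A backward calibrated orbit $\{y_k\}$ from $y$ does \emph{not} in general accumulate on $\text{Mather}(H)$; what you get directly is an accumulation point $\bar x$ satisfying $h(\bar x,\bar x)=0$, i.e.\ a point of the Aubry set, which can be strictly larger than the Mather set. The paper closes this gap in two moves: first, from the calibration identity one obtains $V(y)=V(\bar x)+h(\bar x,y)$; second, one extends this equality from $\bar x$ to every point $x$ in the same irreducible class (via $h(\bar x,x)+h(x,\bar x)=0$ and the triangle inequality for $h$), and then invokes the fact that each irreducible class of the Aubry set is compact, $\sigma$-invariant, and hence supports a minimizing measure---so it meets $\text{Mather}(H)$. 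Your sketch asserts accumulation on $\text{Mather}(H)$ as a ``standard consequence'' without this bridge; you should either supply the irreducible-class argument or state the representation formula over the Aubry set (as the paper's footnote indicates is more usual) and then reduce to the Mather set.
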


\begin{proof}
For the Lipschitz class, these results may be found in the literature
(see, for instance, \cite{GLT, GaribaldiThieullen2012} and the references therein). 
All proofs may be easily extended just adapting the arguments to the regularity here considered.
For the convenience of the reader, we outline the proofs of items~\ref{corollary:RepresentationFormula:1} 
and~\ref{corollary:RepresentationFormula:2}.  

\emph{Item~\ref{corollary:RepresentationFormula:2}.} Suppose $ y \stackrel{n}{=} z $. Denoting $ y_0 = y $, 
since $ V $ is a calibrated sub-action, there exists a sequence $ \{y_k\} \subset \Sigma $ such that
\begin{equation}\label{calibracao y0}
V(y_0) = V(y_k) + \sum_{i=1}^{k-1} [H\circ\sigma^i(y_k) - \bar H], \quad \sigma(y_k)=y_{k-1}, \quad \forall \, k \ge 1.
\end{equation} 
For $ z_0 = z $, we thus consider a sequence $ \{z_k\} $, with $ \sigma(z_k) = z_{k-1} $, such that $ z_k \stackrel{n + k}{=} y_k $ 
for all $ k $. Note that
\begin{equation}\label{subcalibracao z0}
V(z_0) \le V(z_k) + \sum_{i=1}^{k-1} [H\circ\sigma^i(z_k) - \bar H], \qquad \forall \, k \ge 1.
\end{equation} 
From~\eqref{calibracao y0} and~\eqref{subcalibracao z0}, we have $  \osc(V,n) \leq \sum_{k\geq n+1} \osc(H,k) $, that is, $ V \in \mathbb K $.

From the inequality $ V \circ \sigma - V \le H - \bar H $, given any $ y \in \Sigma $, we have that 
$ V(y) \le \min \{ V(x) + h(x,y) \,:\, x \in \text{\rm Mather}(H) \} $. For $ y_0 = y $, we consider again~\eqref{calibracao y0}.
Since
$ V(y_k) = V(y_{k+p}) + \sum_{i=1}^{p-1} [H\circ\sigma^i(y_{k+p}) - \bar H] $, for all $ k,p \ge 0 $,
one may deduce that a limit $ \bar x \in \Sigma $ of subsequence $ \{y_{k_j}\} $ satisfies $ h(\bar x, \bar x) = 0 $.
By passing to the limit in $ V(y_0) = V(y_{k_j}) + \sum_{i=1}^{k_j-1} [H\circ\sigma^i(y_{k_j}) - \bar H] $, 
we see that $ V(y) = V(\bar x) + h(\bar x,y) $. For all $ x $ in the same irreducible class as $ \bar x $ 
(see definition~18 in \cite{GLT}), we may extend the equality $ V(y)  = V(x) + h(x,y) $. As in proposition~19 in \cite{GLT},
also for the summable variation case, each irreducible class is compact and invariant, so that it contains the support of at
least one minimizing measure.  

\emph{Item~\ref{corollary:RepresentationFormula:1}.} It suffices to explain how to show that $ h(x, \cdot) $, $ x \in \text{Mather}(H)$, is a calibrated sub-action.
The argument is standard. For $ x \in \text{Mather}(H)$, one may use Atkinson's theorem \cite{Atkinson} to obtain that, 
as a function of the second variable, $ h(x, \cdot) $ is finite everywhere on $ \Sigma $. Then the calibration property follows
from the very definition of the Peierls barrier. For details, see \cite{GLT, GaribaldiThieullen2012} and the references therein.
\end{proof}

\section{Explicit formulas for double-well type potentials}
\label{section:explicitFomulas}

From now on, we assume  that $H$ is a double-well type potential (see Definition~\ref{assumption:locallyConstant}). 
We show in lemma~\ref{lemma:CohomologousPotentials} that we can reduce the complexity of the notation by taking a suitable coboundary,
which is constant on a countable infinity of cylinders. As the issue of selection or nonselection is independent of the cohomological class of the potential, this lemma will enable us to simplify the proof by using the following reduced assumptions.

\begin{reducedAssumption}
Let $H$ be a double-well type potential. We say that $H$ is reduced if $H=0$ on $[00] \cup [11]$. More precisely, for every $n\geq0$,
\begin{enumerate}
\item  $H(x)=0$, \ if $x\in [00] \cup [11]$; 

\item$H(x)=H_n^0 > 0$, \  if $x\in [01^n0]$, \quad  $H(x)=H_n^1 > 0$, \  if $x\in[10^n1]$;

\item$\sum_{k\geq1} \sup_{n\geq0} |H_k^0 - H_{k+n}^0| < +\infty$, \  $\sum_{k\geq1} \sup_{n\geq0} |H_k^1 - H_{k+n}^1| < +\infty$.
\end{enumerate}
Denote
\begin{gather*}
H_\infty^0 := \lim_{n\to+\infty} H_n^0, \qquad H_\infty^1 := \lim_{n\to+\infty}H_n^1, \\
  H_{min}^0 := \inf_{n\geq1}H_n^0, \qquad H_{min}^1 := \inf_{n\geq1}H_n^1.
\end{gather*}
\end{reducedAssumption}

\begin {lemma} \label{lemma:CohomologousPotentials}
If $H$ is  double-well type potential, then there exists a function $V: \Sigma \to \mathbb{R}$, which is constant on a countable infinity of
cylinders, such that  $\tilde H := H -(V\circ\sigma-V)$ is reduced.
\end{lemma}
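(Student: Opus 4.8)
The plan is to obtain $V$ by telescoping $H$ along maximal runs of equal symbols. Since a double-well type $H$ depends only on the current symbol and on the length of the initial run, the coboundary should be constant on the run-length cylinders $[0^n1]$ and $[1^n0]$, $n\ge1$; these, together with the two fixed points $0^\infty$ and $1^\infty$, form a partition of $\Sigma$, and $\sigma$ simply shortens the current run by one. I will pick the constant values of $V$ so that, along a run of $0$'s, the successive values $a_n^0$ cancel (and symmetrically for runs of $1$'s), so that the only surviving cost lies on the transition cylinders $[01^n0]$ and $[10^n1]$.

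Concretely, I would set $V\equiv v_n^0:=-\sum_{j=1}^{n-1}a_j^0$ on $[0^n1]$ and $V\equiv v_n^1:=-\sum_{j=1}^{n-1}a_j^1$ on $[1^n0]$, together with $V(0^\infty):=-\sum_{j\ge1}a_j^0$ and $V(1^\infty):=-\sum_{j\ge1}a_j^1$; these sums are finite since $\sum_n na_n^0,\sum_n na_n^1<+\infty$, so $V$ is a well-defined function, constant on the countable family of cylinders $\{[0^n1],[1^n0]:n\ge1\}$. First I would check that $V$ is continuous and of summable variation: any cylinder of length $n$ different from $[0^n]$ and $[1^n]$ is contained in a cylinder on which $V$ is already constant, while on $[0^n]$ (resp. $[1^n]$) the monotonicity of $n\mapsto v_n^0$ (resp. $n\mapsto v_n^1$) yields oscillation exactly $\sum_{j\ge n}a_j^0$ (resp. $\sum_{j\ge n}a_j^1$); hence $\osc(V,n)\le\sum_{j\ge n}(a_j^0+a_j^1)$ and $\sum_n\osc(V,n)\le\sum_j j(a_j^0+a_j^1)<+\infty$. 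In particular $\tilde H:=H-(V\circ\sigma-V)$ still has summable variation.

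It then remains to evaluate $\tilde H$ on each piece of the partition. On $[0^k1]$ with $k\ge2$ one has $H\equiv a_{k-1}^0$, $V\equiv v_k^0$ and $V\circ\sigma\equiv v_{k-1}^0$, so $\tilde H\equiv a_{k-1}^0+(v_k^0-v_{k-1}^0)=0$ by the choice of the $v_n^0$'s; with the symmetric computation on $[1^k0]$ ($k\ge2$) and $\tilde H(0^\infty)=H(0^\infty)=0=H(1^\infty)=\tilde H(1^\infty)$, this gives $\tilde H\equiv0$ on $[00]\cup[11]$. On $[01^n0]\subset[01]$ one has $H\equiv b_n^0$, $V\equiv v_1^0=0$ and $V\circ\sigma\equiv v_n^1$, hence $\tilde H\equiv b_n^0-v_n^1=b_n^0+\sum_{j=1}^{n-1}a_j^1=:H_n^0\ge b_n^0>0$, and symmetrically $\tilde H\equiv b_n^1+\sum_{j=1}^{n-1}a_j^0=:H_n^1>0$ on $[10^n1]$; by continuity $\tilde H$ is nonnegative everywhere. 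Finally, from $H_{k+n}^0-H_k^0=(b_{k+n}^0-b_k^0)+\sum_{j=k}^{k+n-1}a_j^1$ one gets $\sum_k\sup_{n\ge0}|H_k^0-H_{k+n}^0|\le\sum_k\sup_{n\ge0}|b_{k+n}^0-b_k^0|+\sum_j ja_j^1<+\infty$ (and likewise on the $1$-side), so $\tilde H$ is reduced, with vanishing $a_n^0,a_n^1$ and with the quantities $H_{min}^0,H_\infty^0,H_{min}^1,H_\infty^1$ of $\tilde H$ equal to those attached to $H$ in Definition~\ref{assumption:locallyConstant}. The only genuinely delicate point is organizational — spotting that $V$ must be constant precisely on the run-length cylinders and ensuring the coboundary remains of summable variation — and it is handled by the oscillation estimate above; I do not expect any deeper obstacle.
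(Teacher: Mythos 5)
Your construction is the paper's, up to an additive constant: the paper sets $V=\sum_{k\geq n}a_k^0+\sum_{k\geq1}a_k^1$ on $[0^n1]$ (and symmetrically on $[1^n0]$), which equals your $v_n^0$ plus the constant $\sum_{k\geq1}(a_k^0+a_k^1)$, so the coboundary and hence $\tilde H$ are identical. The proof is correct and follows essentially the same route; you are in fact slightly more thorough than the paper in verifying the summable-variation and condition-(3) requirements for $\tilde H$ to be reduced.
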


\begin{proof}
Let 
\begin{align*}
& V(x) := \sum_{k=n}^{+\infty} a_k^0  + \sum_{k\geq1} a_k^1, \qquad \text{if  $x \in [0^{n}1]$ and $n\geq1$}, \\
& V(x) := \sum_{k=n}^{+\infty} a_n^1 + \sum_{k\geq1} a_k^0,  \qquad \text{if $x \in [1^n0]$ and $n\geq1$}.
\end{align*}
Then
\begin{equation*}
V\circ\sigma-V =
\begin{cases}
\sum_{k\geq n} a_k^0 - \sum_{k\geq n+1} a_k^0 = a_n^0,                                         & \text{on $[00^n1]$}, \\ 
\sum_{k\geq n} a_k^1 - \sum_{k\geq n+1} a_k^1 = a_n^1,                                         & \text{on $[11^n0]$}, \\ 
(\sum_{k \geq n} a_k^0 + \sum_{k \geq 1} a_k^1) - (\sum_{k\geq1} a_k^1 + \sum_{k\geq1} a_k^0), & \text{on $[10^n1]$}, \\
(\sum_{k \geq n} a_k^1 + \sum_{k \geq 1} a_k^0) - (\sum_{k\geq1} a_k^0 + \sum_{k\geq1} a_k^1), & \text{on $[01^n0]$}.
\end{cases}
\end{equation*}
And the new double-well type potential $\tilde H := H -(V\circ\sigma-V)$ becomes
\begin{align*}
\tilde H(x) &= 0, &  \text{if $x\in [00] \cup [11]$}, \\
\tilde H(x) &= H_n^0 := b_n^0 +\sum_{k=1}^{n-1} a_k^1, & \text{if $x \in [01^n0]$}, \\
\tilde H(x) &= H_n^1 := b_n^1 +\sum_{k=1}^{n-1} a_k^0, & \text{if $x\in [10^n1]$}. & \qedhere
\end{align*}
\end{proof}

From now on, $H$ is supposed to be a reduced double-well type potential. We follow the same methods as in \cite{BaravieraLeplaideurLopes2012} and \cite{Leplaideur2012}. Our main goal is to find the characteristic equation of the eigenvalue $\lambda_\beta$ and the measures 
$\mu_\beta([0])$ and $\mu_\beta([1])$. We also want to identify the criterion of divergence in terms of the Peierls barrier.

Since $H$ is nonnegative and $H(0^\infty)=H(1^\infty)=0$, $H$ has null ergodic minimizing value:  $\bar H=0$. Since $\{0^\infty,1^\infty\}$ is the only invariant set included in $\{H=0\} \subset [00] \cup [11] \cup \{01^\infty,10^\infty\}$, the Mather set is reduced to the two fixed points, namely, $\text{\rm Mather}(H) = \{0^\infty,1^\infty\}$. 

The next proposition gives a complete description of the Peierls barrier.

\begin{proposition}
If $H$ is a reduced double-well type potential, then 
\begin{enumerate}
\item $h(0^\infty,x) = 0$, \ $\forall \, x \in [0]$, \hfill (in particular  $h(0^\infty,0^\infty)=0$); 

\item $h(0^\infty,x) = \inf_{k\geq n} H_k^0$, \   $\forall \, x \in [1^n0]$, \hfill (in particular  $h(0^\infty,1^\infty)= H_\infty^0$);

\item $ \liminf_{x \to 0^\infty} h(x,0^\infty) = H_{min}^0+H_\infty^1$;

\item $h(1^\infty,x) = 0$,\ $\forall \, x \in [1]$, \hfill  (in particular $h(1^\infty,1^\infty)=0$);

\item $h(1^\infty,x) = \inf_{k\geq n} H_k^1$, \ $\forall \, x \in [0^n1]$, \hfill (in particular $h(1^\infty,0^\infty)= H_\infty^1$);

\item $\liminf_{x \to 1^\infty} h(x,1^\infty) = H_{min}^1+H_\infty^0$.
\end{enumerate}
\end{proposition}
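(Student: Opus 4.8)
The plan is to unwind the definition of the Peierls barrier $h(x,y)=\lim_{p\to\infty}\lim_{n\to\infty}S_n^p(x,y)$, using the fact that $\bar H=0$, so that each $S_n^p$ is the infimum of a Birkhoff sum $\sum_{i=0}^{k-1}H\circ\sigma^i(z)$ over orbit segments of length $k\ge n$ connecting the cylinder $[x_0\ldots x_{p-1}]$ to the cylinder $[y_0\ldots y_{p-1}]$ after exactly $n$ steps. Since $H$ is constant on the cylinders $[01^m0]$ and $[10^m1]$ and vanishes on $[00]\cup[11]$, the only positive contributions to a Birkhoff sum come from the ``switches'': each maximal block of $1$'s of length $m$ sitting between two $0$'s contributes $H_m^0$, and each maximal block of $0$'s of length $m$ between two $1$'s contributes $H_m^1$. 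So I would first record this bookkeeping lemma: for any finite word $w=w_0\ldots w_{N-1}$, the cost $\sum_{i}H(\text{orbit through }w)$ (with boundary conventions fixed by the prescribed tail) equals the sum of $H_m^0$ over the $1$-blocks and $H_m^1$ over the $0$-blocks, plus a controlled error from the variation tail $\sum_{k\ge p}\osc(H,k)$ which vanishes as $p\to\infty$. This reduces everything to a combinatorial optimization over how many switches to make and how long to linger.

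For item (1): to go from the $p$-cylinder of $0^\infty$ to any $x\in[0]$, for $p$ large $x$ agrees with $0^\infty$ on a long prefix, so a path that stays in $[00]$ for a long time and then matches $x$'s remaining prescribed coordinates costs only the variation tail (no full $0$-block or $1$-block is completed), hence $h(0^\infty,x)=0$; the limit over $n$ is harmless because one can always pad with extra time in $[00]$ at zero cost. For item (2), $x\in[1^n0]$: now any admissible path must create at least one transition from the $0$-region into the block of $1$'s that $x$ prescribes. The cheapest such path spends a long time in $[00]$, then switches to $1$'s, stays there, and exits at a $0$; the completed $1$-block has some length $k$ which, because $x\in[1^n0]$ only forces the first $n$ of those symbols to be $1$, can be any $k\ge n$ (letting $p\to\infty$ and $n\to\infty$ in the $S_n^p$ definition lets $k$ range over all values $\ge n$ but we take the outer $n$-infimum, so effectively $k\ge n$ with $n$ the cylinder parameter of $x$); optimizing gives $\inf_{k\ge n}H_k^0$. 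Letting $x\to1^\infty$, i.e. $n\to\infty$, yields $h(0^\infty,1^\infty)=\lim_{n}\inf_{k\ge n}H_k^0=\liminf_k H_k^0=H_\infty^0$, using that $H_k^0\to H_\infty^0$ by the summable-variation hypothesis on $\{H_k^0\}$. Items (4) and (5) are identical with the roles of $0$ and $1$, and of the sequences $H^0,H^1$, exchanged.

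For item (3), $\liminf_{x\to0^\infty}h(x,0^\infty)$: now $x$ is close to $0^\infty$, so $x\in[0^{N}\ldots]$ for large $N$, and a path from the $p$-cylinder of $x$ back to the $p$-cylinder of $0^\infty$ must, if $x$ itself prescribes symbols that are not eventually $0$, first complete whatever block structure $x$ forces and then return to $0^\infty$. The generic cheapest scenario, when $x$ forces one $0$-block then a $1$-block then back to $0$, is to pay $H_m^1$ for a $0$-block of optimal length (infimum $H_{min}^1$) plus $H_k^0$ for the final $1$-block that must be completed before settling into $0^\infty$, whose length is unconstrained so its infimum is $\lim_k H_k^0=H_\infty^0$; hence the liminf over $x\to0^\infty$ equals $H_{min}^1+H_\infty^0$. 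Wait — by the stated symmetry of item (3) (it mentions $H_{min}^0+H_\infty^1$, while item (6) mentions $H_{min}^1+H_\infty^0$), the cheapest excursion near $0^\infty$ costs one $0$-block traversal giving $H_{min}^0$? No: a $0$-block contributes $H_m^1$ and a $1$-block contributes $H_m^0$, so an excursion that dips into the $1$'s and comes back costs $H_{min}^0$ (the cheap $1$-block) plus a terminal $H_\infty^1$-type cost for re-entering and re-exiting the $0$-region; this matches $H_{min}^0+H_\infty^1$ in item (3). The careful accounting of which block gives which $H^\bullet$ is exactly the point to get right, and I would verify it against the two ``in particular'' identities $h(0^\infty,1^\infty)=H_\infty^0$ and $h(1^\infty,0^\infty)=H_\infty^1$ as a consistency check.

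The main obstacle I anticipate is not any single estimate but the interchange of the three limits ($p\to\infty$, $n\to\infty$, and the infimum defining $S_n^p$) together with the need to realize, rather than merely bound below, the optimal path: one must show that the combinatorially cheapest orbit segment can be chosen within the prescribed $p$-cylinders with an error going to $0$ as $p\to\infty$, and that padding with time spent in $[00]$ or $[11]$ costs nothing so the outer $n\to\infty$ and inner $k\ge n$ constraints are compatible. Controlling this uniformly uses assumption (3) of the reduced definition, $\sum_k\sup_{n\ge0}|H_k^\bullet-H_{k+n}^\bullet|<\infty$, which both forces $H_k^\bullet\to H_\infty^\bullet$ and bounds the variation tail; I would state the needed uniform convergence as a preliminary remark and then the six items follow by the case analysis above.
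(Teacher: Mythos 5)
Your proposal is correct and follows essentially the same route as the paper: unwind the definition of $S_n^p$, decompose any admissible orbit segment into maximal blocks $0^{m_1}1^{n_1}\cdots$ so that the Birkhoff sum is exactly the sum of the block costs $H_{n_i}^0$ and $H_{m_i}^1$ (for a reduced potential there is in fact no variation-tail error to control), and optimize over the block lengths; your corrected accounting for item (3) --- one $1$-block costing at least $H_{min}^0$ plus the terminal return into $[0]$ costing $H_\infty^1$ --- is exactly the paper's argument. The only step you leave implicit, passing from $h(0^\infty,x)=\inf_{k\ge n}H_k^0$ on $[1^n0]$ to $h(0^\infty,1^\infty)=H_\infty^0$, is justified in the paper by the continuity of $h(0^\infty,\cdot)$ from Proposition~\ref{corollary:RepresentationFormula}.
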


\begin{proof} \

{\it Item 1.} Clearly $h(0^\infty,x) = 0$, $\forall \, x \in [0]$, since $H \geq 0$ and  $H=0$  on $[00]$.  

{\it Item 2.} Let $x \in[1^n0]$ and $p\geq1$. Every  $z\in\Sigma$ satisfying $z \stackrel{p}{=} 0^\infty $ and 
$\sigma^k(z) \stackrel{p}{=} x$ belongs to $[0^{m_1}1^{n_1}\ldots 0^{m_r}1^{n_r}0]$,  with $m_1 \geq p$, $n_r \geq n$ and $k=m_1+n_1+\cdots+n_r-n$. 
 The corresponding sum $\sum_{i=0}^{k-1} [H \circ \sigma^i(z)-\bar H]$  is  $H_{n_1}^0+H_{m_2}^1 + \cdots + H_{n_r}^0$, 
 which gives (for every $m\geq p$)
\[
S_m^p(0^\infty,x) = \inf_{k \geq n}  H_k^0, \quad  h(0^\infty,x) = \inf_{k \geq n}  H_k^0.
\]
By continuity of $x \mapsto h(0^\infty,x)$ (see proposition~\ref{corollary:RepresentationFormula}), we have $h(0^\infty,1^\infty)=H_\infty^0$.

{\it Item 3.} On the one hand, if $x \in [0]$, $x \not= 0^\infty$ and $p \geq 1$, then every  $z$ satisfying $z \stackrel{p}{=} x $ and $\sigma^k(z)  \stackrel{p}{=} 0^\infty$ has the form $z=0^{m_1}1^{n_1}\cdots 0^{m_r}1^{n_r}0^p \cdots$ with $m_i\geq 1$, $n_i \geq 1$ and $k=m_1+n_1+\cdots+n_r$. The corresponding sum $\sum_{i=0}^{k-1} [H \circ \sigma^i(z)-\bar H]$  is bounded from below by  $ H_{min}^0+ \inf_{q\geq p}H_q^1$ and we obtain  $h(x,0^\infty) \geq H_{min}^0+ H_\infty^1$. On the other hand, for every $m, n\geq1$ and $k \geq  p \geq m+n$, $S_k^p(0^m1^n0^\infty,0^\infty)=H_n^0+H_\infty^1$. These facts together imply
\[
\liminf_{x\to 0^\infty} h(x,0^\infty)=H_{min}^0 + H_\infty^1.
\]

The other expressions are similarly obtained by permuting $0$ and $1$. 
\end{proof}

We recall  the notion of a Jacobian $J$  of a probability measure $\nu$ which is not necessarily invariant by the shift $\sigma$.  It is  a nonnegative Borel function $J : \Sigma \to \mathbb{R}^+$  such that, for every bounded Borel test function $f:\Sigma \to \mathbb{R}$,
\[
\int_{[0]} f \circ \sigma(x)J(x) \, d\nu(x) = \int_{[1]} f \circ \sigma(x)J(x) \, d\nu(x) = \int_\Sigma f(x) \, d\nu(x).
\]
Note that, if such a Jacobian  exists, it is unique. 

From now on, whenever a function $ f : \Sigma \to \mathbb R $ is constant on a cylinder $ [i_0 i_1 \ldots i_{n-1}] $, we denote 
$ f(i_0 i_1 \ldots i_{n-1}) $ the constant value $ f |_{[i_0 i_1 \ldots i_{n-1}]} $.

\begin{proposition} \label{proposition:BasicFormulas}
Let $H$ be a reduced double-well type potential. Let  $\Phi_\beta$, $\nu_\beta$ and $\lambda_\beta$ be the solutions of the Perron-Frobenius equation as defined in theorem~\ref{theorem:RuelleOperator}. Then  $\Phi_\beta$ is constant on every cylinder $[0^n1]$ or $[1^n0]$, $n\geq1$, and $\nu_\beta$ has constant Jacobian $J_\beta$ on the cylinders $[0^2]$, $[1^2]$, $[01^n0]$ and $[10^n1]$, $n\geq1$. More precisely,
\begin{enumerate}
\item \label{item:BasicFormulas_1} 
$\displaystyle \Phi_\beta(0^n1) = \sum_{k\geq n} \frac{\exp(-\beta H_k^1)}{\lambda_\beta^{k-n+1}}  \Phi_\beta (10) $, 
\,\, $\displaystyle \Phi_\beta(0^\infty) = \frac{\exp(-\beta H_\infty^1)}{\lambda_\beta -1} \Phi_\beta (10) $;
\item 
\label{item:BasicFormulas_2} $\displaystyle \Phi_\beta(1^n0) = \sum_{k\geq n} \frac{\exp(-\beta H_k^0)}{\lambda_\beta^{k-n+1}} \Phi_\beta(01)$, 
\,\, $\displaystyle \Phi_\beta(1^\infty) = \frac{\exp(-\beta H_\infty^0)}{\lambda_\beta -1} \Phi_\beta(01)$;
\item if $H_\infty^0= H_\infty^1=0$, then $\max \Phi_\beta = \max\{\Phi_\beta(0^\infty),\Phi_\beta(1^\infty)\}=1$;
\item \label{proposition:BasicFormulas:4}$\displaystyle \nu_\beta[1^n0] = \frac{1}{\lambda_\beta^{n-1}}\nu_\beta[10]$, or $J_\beta(x) = \lambda_\beta$, $\forall \, x\in [1^2]$;
\item $\displaystyle \nu_\beta[0^n1] = \frac{1}{\lambda_\beta^{n-1}}\nu_\beta[01]$, or $J_\beta(x) = \lambda_\beta$, $\forall \, x\in [0^2]$;
\item \label{proposition:BasicFormulas:6} $\displaystyle \nu_\beta[01^n0] = \frac{\exp(-\beta H_n^0)}{\lambda_\beta^n}  \nu_\beta[10]$, or $\displaystyle J_\beta(x) = \frac{\lambda_\beta}{\exp(-\beta H_n^0)}$, $\forall \, x \in [01^n0]$;
\item $\displaystyle \nu_\beta[10^n1] = \frac{\exp(-\beta H_n^1)}{\lambda_\beta^n}  \nu_\beta[01]$, 
or $\displaystyle J_\beta(x) = \frac{\lambda_\beta}{\exp(-\beta H_n^1)}$, $\forall \, x \in [10^n1]$.
\end{enumerate}
\end{proposition}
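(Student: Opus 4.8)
The statement is a direct computation exploiting the fact that, by the reduced double-well assumption, the potential is constant on the relevant cylinders and, crucially, that for a point $x = 0^n y$ with $y \in [1]$ the two preimages $0x = 0^{n+1}y$ and $1x = 10^n y$ have a very rigid structure. The plan is to first establish the qualitative claim that $\Phi_\beta$ is constant on each cylinder $[0^n1]$ and $[1^n0]$, then read off the recursions from the eigenfunction equation $\mathcal L_\beta[\Phi_\beta] = \lambda_\beta \Phi_\beta$, solve the recursions to get the closed-form series in items~\ref{item:BasicFormulas_1}--\ref{item:BasicFormulas_2}, and finally do the dual computation for $\nu_\beta$ via its Jacobian.

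First I would prove that $\Phi_\beta$ is constant on cylinders of the form $[0^n1]$ and $[1^n0]$. Write the eigenfunction equation at a point $y \in \Sigma$: $e^{-\beta H(0y)}\Phi_\beta(0y) + e^{-\beta H(1y)}\Phi_\beta(1y) = \lambda_\beta \Phi_\beta(y)$. For $y \in [1^{n}0]$ with $n \ge 1$, the preimage $0y \in [01^n0]$ and $1y \in [1^{n+1}0]$; the values $H(0y) = H_n^0$ and $H(1y) = 0$ (since $1y$ starts with $11$) depend only on which of these cylinders $y$ lies in, not on the deeper coordinates. Running this from $n=1$ upward, together with the analogous relation anchored at $y = 0^\infty$ (where $H(00^\infty)=0$), shows by induction that $\Phi_\beta$ restricted to $[1^n0]$ depends only on $n$; the case $[0^n1]$ is symmetric under $0\leftrightarrow 1$. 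Once this is known, I write $\varphi_n := \Phi_\beta(1^n0)$ and the eigenfunction equation becomes the scalar recursion $e^{-\beta H_n^0}\Phi_\beta(01) + \varphi_{n+1} = \lambda_\beta \varphi_n$, i.e. $\varphi_n = \lambda_\beta^{-1}\varphi_{n+1} + \lambda_\beta^{-1}e^{-\beta H_n^0}\Phi_\beta(01)$. Iterating this (using $\lambda_\beta > 1$ — which holds because $\bar H = 0$ and $H$ is not identically zero, so $-\beta\bar H_\beta \to 0$ forces $\lambda_\beta$ strictly above $1$; I would cite Theorem~\ref{theorem:RuelleOperator} and the summability to justify convergence of the tail) yields $\varphi_n = \sum_{k\ge n}\lambda_\beta^{-(k-n+1)}e^{-\beta H_k^0}\Phi_\beta(01)$, which is item~\ref{item:BasicFormulas_2}; letting $n\to\infty$ and using $H_k^0 \to H_\infty^0$ together with $\sum_k \lambda_\beta^{-(k-n+1)} = (\lambda_\beta-1)^{-1}$ gives $\Phi_\beta(1^\infty)$. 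Item~\ref{item:BasicFormulas_1} is the mirror statement. Item 3 (when $H_\infty^0 = H_\infty^1 = 0$) follows since on $[0^n1]$ and $[1^n0]$ the tail formulas show $\Phi_\beta$ there is a proper tail of the series for $\Phi_\beta(0^\infty)$, $\Phi_\beta(1^\infty)$, hence strictly smaller, while $H\ge 0$ and monotonicity along preimages forces the max of $\Phi_\beta$ (normalized to $1$) to be attained at a point whose forward orbit stays in $[00]\cup[11]$, i.e. at $0^\infty$ or $1^\infty$.

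For the $\nu_\beta$ part, I would use the dual eigenequation $\mathcal L_\beta^*[\nu_\beta] = \lambda_\beta \nu_\beta$, which unwinds to the statement that for any bounded Borel $f$, $\int f\,d\nu_\beta = \lambda_\beta^{-1}\int (e^{-\beta H(0x)}f(0x) + e^{-\beta H(1x)}f(1x))\,d\nu_\beta(x)$; equivalently, pushing forward, $\nu_\beta(\sigma^{-1}A \cap [i]) = \lambda_\beta^{-1}\int_A e^{-\beta H(i\cdot)}\,d\nu_\beta$ for $i \in \{0,1\}$ on cylinders $A$ where $H(i\cdot)$ is constant. Taking $A = [1^{n-1}0]$ and $i = 1$ gives $\nu_\beta[1^n0] = \lambda_\beta^{-1}\nu_\beta[1^{n-1}0]$ (since $H=0$ on $[11]$), hence by induction $\nu_\beta[1^n0] = \lambda_\beta^{-(n-1)}\nu_\beta[10]$, which is item~\ref{proposition:BasicFormulas:4}; the claim that the Jacobian equals $\lambda_\beta$ on $[1^2]$ is the same statement rephrased through the defining identity of $J_\beta$. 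Taking instead $A = [1^n0]$ and $i = 0$ (so $0\cdot A \subset [01^n0]$ where $H = H_n^0$) gives $\nu_\beta[01^n0] = \lambda_\beta^{-1}e^{-\beta H_n^0}\nu_\beta[1^n0] = \lambda_\beta^{-n}e^{-\beta H_n^0}\nu_\beta[10]$, which is item~\ref{proposition:BasicFormulas:6}, and reading this against the Jacobian identity $\int_{[01^n0]} f\circ\sigma \, J_\beta \, d\nu_\beta = \int_{[1^n0]} f\,d\nu_\beta$ identifies $J_\beta = \lambda_\beta e^{\beta H_n^0}$ there. Items 5 and 7 are the $0\leftrightarrow 1$ mirrors.

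The main obstacle, and the only place demanding care rather than bookkeeping, is the first step: rigorously establishing that $\Phi_\beta$ (resp. the relevant $\nu_\beta$-measures) is genuinely constant on each $[0^n1]$, $[1^n0]$. The induction on $n$ works cleanly from the eigenfunction equation because at a point $y$ in such a cylinder exactly one of the two preimages stays inside $[00]$ or $[11]$ (where $H=0$) and the other jumps to the "opposite" block with a value $H_n^i$ that depends only on $n$ — but one must make sure the induction has a valid base case and that the tail sums actually converge, which is where $\lambda_\beta > 1$ and Assumption~\ref{assumption:locallyConstant}.\ref{assumption:locallyConstant_4} (summability of variations of $b_n^i$, equivalently of $H_n^i$) enter; uniqueness of $\Phi_\beta$ and $\nu_\beta$ from Theorem~\ref{theorem:RuelleOperator} then guarantees these constant-on-cylinders solutions are the solutions. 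Everything after that is linear recursion solving and the elementary change-of-variables defining the Jacobian.
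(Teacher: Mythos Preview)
Your proposal is correct and follows essentially the same route as the paper: iterate the eigenfunction equation $\mathcal{L}_\beta[\Phi_\beta]=\lambda_\beta\Phi_\beta$ along the $0$-preimages to obtain the series for $\Phi_\beta(0^n1)$ and $\Phi_\beta(1^n0)$, and derive the $\nu_\beta$-formulas from the Jacobian identity $J_\beta=\lambda_\beta/\exp(-\beta H)$ coming from $\mathcal{L}_\beta^*[\nu_\beta]=\lambda_\beta\nu_\beta$. You are in fact more explicit than the paper about the constancy-on-cylinders step; note that your induction as phrased has a circular base case (constancy on $[10]$ requires constancy on $[01]$ and conversely), but your remark that uniqueness of $\Phi_\beta$ closes this loop is exactly the right fix, and your argument for item~3 should read ``bounded above by'' rather than ``proper tail of'' the series for $\Phi_\beta(0^\infty)$, $\Phi_\beta(1^\infty)$.
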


\begin{proof} \

{\it Part 1.} The equation $\mathcal{L}_\beta[\Phi_\beta]= \lambda_\beta \Phi_\beta$ implies
\begin{align*}
\Phi_\beta(0^n1) &= \frac{1}{\lambda_\beta} \Phi_\beta(0^{n+1}1) + \frac{1}{\lambda_\beta}\exp(-\beta H_n^1) \Phi_\beta(10) \\
&= \frac{1}{\lambda_\beta^2}\Phi_\beta(0^{n+2}1) +
 \Big[ \frac{1}{\lambda_\beta}\exp(-\beta H_n^1) + \frac{1}{\lambda_\beta^2}\exp(-\beta H_{n+1}^1)  \Big] \Phi_\beta(10) \\
&= \cdots = \Big[ \frac{1}{\lambda_\beta}\exp(-\beta H_n^1) + \frac{1}{\lambda_\beta^2}\exp(-\beta H_{n+1}^1) + \cdots  \Big] \Phi_\beta(10).
\end{align*}
A similar computation is done for $\Phi_\beta(1^n0)$.

{\it Part 2.} For every bounded Borel function $f:\Sigma\to\mathbb{R}$, we have
\begin{equation*}
\int\mathbb{1}_{[0]} f\circ \sigma \frac{\lambda_\beta}{\exp(-\beta H)} \, d\nu_\beta 
= \int \frac{\mathcal{L}_\beta}{\lambda_\beta} \Big[ \mathbb{1}_{[0]} f\circ \sigma  \frac{\lambda_\beta}{\exp(-\beta H)} \Big] \, d\nu_\beta = \int f \,d\nu_\beta.
\end{equation*}
A similar computation is done for $\mathbb{1}_{[1]}$. We thus obtain 
\begin{gather*}
J_\beta(x) = \frac{\lambda_\beta}{\exp(-\beta H(x))}, \quad \forall \, x \in \Sigma.
\end{gather*}
In particular, $J_\beta(x) =\lambda_\beta$ for $x \in [0^2] \cup [1^2]$,  $J_\beta(x) ={\lambda_\beta}/{\exp(-\beta H_n^0)} $ 
for $x \in [01^n0]$, and $J_\beta(x) ={\lambda_\beta}/{\exp(-\beta H_n^1)} $ for $x \in [10^n1]$.

{\it Part 3.} With respect to the eigenmeasure, we discuss items~\ref{proposition:BasicFormulas:4} and~\ref{proposition:BasicFormulas:6}; the others are similarly proved. Hence, by applying the Jacobian, just note that
\begin{align*}
\nu_\beta[10] &= \lambda_\beta \nu_\beta[1^20] = \lambda_\beta^2 \nu_\beta[1^30] = \cdots = \lambda_\beta^{n-1}\nu_\beta[1^n0] \\
&= \frac{\lambda_\beta^n}{\exp(-\beta H_n^0)} \nu_\beta[01^n0]. \qedhere
\end{align*}
\end{proof}

For every reduced double-well type potential, we define the following analytic functions that will play a fundamental role in the dichotomy:
\begin{gather}
 F_\beta^0(\lambda) := \sum_{k\geq 1} \frac{1}{\lambda^k} \exp(-\beta H_k^0), \qquad   F_\beta^1(\lambda) := \sum_{k\geq 1} \frac{1}{\lambda^k} \exp(-\beta H_k^1), \label{equation:F} \\
\tilde  F_\beta^0(\lambda) := \sum_{k\geq 1} \frac{k}{\lambda^k} \exp(-\beta H_k^0), \qquad   \tilde  F_\beta^1(\lambda) := \sum_{k\geq 1} \frac{k}{\lambda^k} \exp(-\beta H_k^1). \label{equation:Ftilde}
\end{gather}
We will also keep in mind the following equalities
\begin{equation} \label{equation:simpleSeries}
\forall \, N\geq0, \quad \sum_{k\geq N+1} \frac{1}{\lambda^k} = \frac{1}{\lambda^N(\lambda-1)}, \quad \sum_{k\geq N+1} \ \frac{k}{\lambda^k} = \frac{N(\lambda-1)+\lambda}{\lambda^N(\lambda-1)^2}.
\end{equation}

\begin{corollary} \label{corollary:GibbsMeasureEstimates}
Let $H$ be a reduced double-well type potential. Then
\begin{enumerate}
\item \label{corollary:GibbsMeasureEstimates_1} $F_\beta^0(\lambda_\beta)F_\beta^1(\lambda_\beta) = 1$ \qquad (the characteristic equation);
\item \label{corollary:GibbsMeasureEstimates_2} $\Phi_\beta(01) = F_\beta^1(\lambda_\beta) \Phi_\beta(10)$, \quad $\Phi_\beta(10) = F_\beta^0(\lambda_\beta) \Phi_\beta(01)$;
\item \label{corollary:GibbsMeasureEstimates_3} $\nu_\beta[01] = F_\beta^0(\lambda_\beta) \nu_\beta[10]$, \quad $\nu_\beta[10] = F_\beta^1(\lambda_\beta) \nu_\beta[01]$.
\end{enumerate}
\end{corollary}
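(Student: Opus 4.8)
The plan is to derive the three identities directly from Proposition~\ref{proposition:BasicFormulas} by summing the geometric-type series that define $F_\beta^0, F_\beta^1$, and then to close the loop to obtain the characteristic equation. First I would establish item~\ref{corollary:GibbsMeasureEstimates_2}. Recall from Proposition~\ref{proposition:BasicFormulas}(\ref{item:BasicFormulas_1}) that $\Phi_\beta(0^n1) = \sum_{k\geq n} \lambda_\beta^{-(k-n+1)} \exp(-\beta H_k^1)\,\Phi_\beta(10)$. Evaluating the eigenvalue equation $\mathcal L_\beta[\Phi_\beta] = \lambda_\beta \Phi_\beta$ at a point of the cylinder $[01]$, i.e.\ using $\Phi_\beta(01) = \lambda_\beta^{-1}\Phi_\beta(001) + \lambda_\beta^{-1}\exp(-\beta H_1^1)\Phi_\beta(10)$ together with the formula for $\Phi_\beta(0^21)$, a reindexing $k \mapsto k$ shows the bracketed sum telescopes into $\sum_{k\geq 1}\lambda_\beta^{-k}\exp(-\beta H_k^1) = F_\beta^1(\lambda_\beta)$, giving $\Phi_\beta(01) = F_\beta^1(\lambda_\beta)\Phi_\beta(10)$. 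The symmetric computation with $0$ and $1$ interchanged, using Proposition~\ref{proposition:BasicFormulas}(\ref{item:BasicFormulas_2}), yields $\Phi_\beta(10) = F_\beta^0(\lambda_\beta)\Phi_\beta(01)$.

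Item~\ref{corollary:GibbsMeasureEstimates_3} is obtained the same way on the eigenmeasure side. From Proposition~\ref{proposition:BasicFormulas}(\ref{proposition:BasicFormulas:4})--(\ref{proposition:BasicFormulas:6}), the cylinder $[01]$ decomposes (up to a $\nu_\beta$-null set coming from $01^\infty$) as the disjoint union of $[010]$ and the cylinders $[01^{n}0]$ for $n\geq 2$; more precisely $\nu_\beta[01] = \sum_{n\geq 1}\nu_\beta[01^n0] = \sum_{n\geq 1}\lambda_\beta^{-n}\exp(-\beta H_n^0)\,\nu_\beta[10] = F_\beta^0(\lambda_\beta)\,\nu_\beta[10]$, where I should note that the contribution of $01^\infty$ vanishes because $\nu_\beta[01^n] = \nu_\beta[01^n0] + \nu_\beta[01^{n+1}] \to 0$. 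The interchange of $0$ and $1$ gives $\nu_\beta[10] = F_\beta^1(\lambda_\beta)\,\nu_\beta[01]$.

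The characteristic equation in item~\ref{corollary:GibbsMeasureEstimates_1} then follows immediately by substitution: from item~\ref{corollary:GibbsMeasureEstimates_2}, $\Phi_\beta(01) = F_\beta^1(\lambda_\beta)\Phi_\beta(10) = F_\beta^1(\lambda_\beta)F_\beta^0(\lambda_\beta)\Phi_\beta(01)$, and since $\Phi_\beta > 0$ everywhere (Theorem~\ref{theorem:RuelleOperator}), so that $\Phi_\beta(01) \neq 0$, we may cancel it to obtain $F_\beta^0(\lambda_\beta)F_\beta^1(\lambda_\beta) = 1$. (The same conclusion falls out of item~\ref{corollary:GibbsMeasureEstimates_3} using $\nu_\beta[01] > 0$.)

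The only genuine subtlety — and the point I would treat most carefully — is the convergence and telescoping of the series, i.e.\ checking that the manipulations in Proposition~\ref{proposition:BasicFormulas} indeed sum to $F_\beta^0(\lambda_\beta)$ and $F_\beta^1(\lambda_\beta)$ rather than to a partial tail. This requires knowing $\lambda_\beta > 1$ (so that $\sum_k \lambda_\beta^{-k}\exp(-\beta H_k^i)$ converges, using also $H_k^i$ bounded below), which in turn follows from $\bar H = 0$ and $\lambda_\beta = e^{-\beta\bar H_\beta}$ with $\bar H_\beta < 0$ for $\beta$ large, or more directly from the fact that the formulas for $\Phi_\beta(0^\infty), \Phi_\beta(1^\infty)$ in Proposition~\ref{proposition:BasicFormulas} only make sense when $\lambda_\beta > 1$. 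Everything else is a routine reindexing of absolutely convergent series, so the corollary follows.
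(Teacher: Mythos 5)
Your argument is correct and follows essentially the same route as the paper: item~\ref{corollary:GibbsMeasureEstimates_2} is Proposition~\ref{proposition:BasicFormulas}(\ref{item:BasicFormulas_1})--(\ref{item:BasicFormulas_2}) evaluated at $n=1$, item~\ref{corollary:GibbsMeasureEstimates_3} comes from decomposing $[01]$ into the cylinders $[01^n0]$, and the characteristic equation follows by combining the two eigenfunction identities (the paper multiplies them term by term, you substitute and cancel $\Phi_\beta(01)>0$ --- the same thing). The convergence issues you flag are already absorbed into Proposition~\ref{proposition:BasicFormulas}, so nothing further is needed.
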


\begin{proof}
Item~\ref{item:BasicFormulas_1} of proposition~\ref{proposition:BasicFormulas} implies, by taking $n=1$,
\begin{gather*}
\Phi_\beta(01) = F_\beta^1(\lambda_\beta) \Phi_\beta(10) \quad \text{and} \quad \Phi_\beta(10) = F_\beta^0(\lambda_\beta) \Phi_\beta(01).
\end{gather*}
By multiplying term to term, we obtain $F_\beta^0(\lambda_\beta)F_\beta^1(\lambda_\beta) = 1$. We also have
\begin{gather*}
\nu_\beta[01] = \sum_{n\geq1} \nu_\beta[01^n0]  = \sum_{n\geq1} \frac{1}{\lambda_\beta^n} \exp(-\beta H_n^0) \nu_\beta[10] = F_\beta^0(\lambda_\beta)\nu_\beta[10]. \qedhere
\end{gather*}
\end{proof}

\begin{corollary} \label{corollary:estimatesForGibbsMeasure}
Let $H$ be a reduced double-well type potential. Then
\begin{enumerate}
\item \label{corollary:estimatesForGibbsMeasure_1} $\mu_\beta[01] = \mu_\beta[10]$; 
\item \label{corollary:estimatesForGibbsMeasure_2} $\displaystyle \frac{\mu_\beta[0^n1]}{\mu_\beta[01]} = \Big[ \sum_{k\geq n}\frac{1}{\lambda_\beta^k} \exp(-\beta H_k^1)  \Big] F_\beta^0(\lambda_\beta)$, \quad $\displaystyle{\frac{\mu_\beta[0]}{\mu_\beta[01]} = \frac{\tilde F_\beta^1(\lambda_\beta)}{F_\beta^1(\lambda_\beta)} }$;
\item \label{corollary:estimatesForGibbsMeasure_3} $\displaystyle \frac{\mu_\beta[1^n0]}{\mu_\beta[10]} = \Big[ \sum_{k\geq n}\frac{1}{\lambda_\beta^k} \exp(-\beta H_k^0)  \Big] F_\beta^1(\lambda_\beta)$, \quad $\displaystyle{\frac{\mu_\beta[1]}{\mu_\beta[10]} = \frac{\tilde F_\beta^0(\lambda_\beta)}{F_\beta^0(\lambda_\beta)} }$;
\item \label{corollary:estimatesForGibbsMeasure_4} $\displaystyle \frac{\mu_\beta[01^n0]}{\mu_\beta[10]} = \frac{\exp(-\beta H_n^0) F_\beta^1(\lambda_\beta)}{\lambda_\beta^n}$, \quad $\displaystyle \frac{\mu_\beta[10^n1]}{\mu_\beta[01]} = \frac{\exp(-\beta H_n^1) F_\beta^0(\lambda_\beta)}{\lambda_\beta^n}$;
\item \label{corollary:estimatesForGibbsMeasure_5}  $\displaystyle{\frac{\mu_\beta[0]}{\mu_\beta[1]} = \frac{ F_\beta^0(\lambda_\beta)}{F_\beta^1(\lambda_\beta)} \frac{\tilde F_\beta^1(\lambda_\beta)}{\tilde F_\beta^0(\lambda_\beta)}}$.
\end{enumerate}
\end{corollary}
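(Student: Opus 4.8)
The plan is to derive all five identities of Corollary~\ref{corollary:estimatesForGibbsMeasure} directly from Proposition~\ref{proposition:BasicFormulas} and Corollary~\ref{corollary:GibbsMeasureEstimates}, using the definition $\mu_\beta = \Phi_\beta \nu_\beta / \int \Phi_\beta \, d\nu_\beta$ and the fact that $\mu_\beta$ is computed cylinder-by-cylinder as $\mu_\beta[C] = \frac{1}{Z_\beta}\int_{C} \Phi_\beta \, d\nu_\beta$ for a normalizing constant $Z_\beta$ that cancels in every ratio. The key observation is that on each cylinder $[0^n1]$ or $[1^n0]$ the eigenfunction $\Phi_\beta$ is \emph{constant} (Proposition~\ref{proposition:BasicFormulas}), so the integral over such a cylinder factors as $\Phi_\beta(0^n1)\,\nu_\beta[0^n1]$ (respectively $\Phi_\beta(1^n0)\,\nu_\beta[1^n0]$); this is what makes all the series sum up cleanly.

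First I would establish item~\ref{corollary:estimatesForGibbsMeasure_1}. Write $\mu_\beta[01] = \frac{1}{Z_\beta}\Phi_\beta(01)\nu_\beta[01]$ and $\mu_\beta[10] = \frac{1}{Z_\beta}\Phi_\beta(10)\nu_\beta[10]$; then substitute $\Phi_\beta(01) = F_\beta^1(\lambda_\beta)\Phi_\beta(10)$ (Corollary~\ref{corollary:GibbsMeasureEstimates}, item~\ref{corollary:GibbsMeasureEstimates_2}) and $\nu_\beta[01] = F_\beta^0(\lambda_\beta)\nu_\beta[10]$ (item~\ref{corollary:GibbsMeasureEstimates_3}), so that $\mu_\beta[01] = F_\beta^0(\lambda_\beta)F_\beta^1(\lambda_\beta)\,\mu_\beta[10] = \mu_\beta[10]$ by the characteristic equation~\ref{corollary:GibbsMeasureEstimates_1}. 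For item~\ref{corollary:estimatesForGibbsMeasure_2}, use $\mu_\beta[0^n1] = \frac{1}{Z_\beta}\Phi_\beta(0^n1)\nu_\beta[0^n1]$ together with the formula $\Phi_\beta(0^n1) = \big(\sum_{k\geq n}\lambda_\beta^{-(k-n+1)}\exp(-\beta H_k^1)\big)\Phi_\beta(10)$ from Proposition~\ref{proposition:BasicFormulas}\ref{item:BasicFormulas_1} and $\nu_\beta[0^n1] = \lambda_\beta^{-(n-1)}\nu_\beta[01]$; after cancelling $\lambda_\beta$-powers and using $\Phi_\beta(10)\nu_\beta[01] = F_\beta^0(\lambda_\beta)\Phi_\beta(10)\nu_\beta[10]$ one is left with the stated expression times $\mu_\beta[01]$. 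The second formula in that item then follows from $\mu_\beta[0] = \sum_{n\geq1}\mu_\beta[0^n1]$ (since $[0] = \bigsqcup_{n\geq1}[0^n1]$ up to the null point $0^\infty$), reindexing the double sum and recognizing $\sum_{n\geq1}\sum_{k\geq n}(\cdots) = \sum_{k\geq1} k\,(\cdots) = \tilde F_\beta^1(\lambda_\beta)$ after the $F_\beta^0(\lambda_\beta)$ factor is absorbed via the characteristic equation. Item~\ref{corollary:estimatesForGibbsMeasure_3} is obtained by exchanging the roles of $0$ and $1$.

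For item~\ref{corollary:estimatesForGibbsMeasure_4}, note that on $[01^n0]$ the function $\Phi_\beta$ equals $\Phi_\beta(10)$ (the first coordinate is $0$, but the relevant constancy is that $[01^n0] \subset [1^n 0]$ after one shift; more directly, $\Phi_\beta$ restricted to $[01^n0]$ is constant because $H$ is reduced and the transfer equation propagates constancy) and $\nu_\beta[01^n0] = \lambda_\beta^{-n}\exp(-\beta H_n^0)\nu_\beta[10]$ from Proposition~\ref{proposition:BasicFormulas}\ref{proposition:BasicFormulas:6}; dividing by $\mu_\beta[10] = \frac{1}{Z_\beta}\Phi_\beta(10)\nu_\beta[10]$ and replacing $\nu_\beta[10] = F_\beta^1(\lambda_\beta)\nu_\beta[01]$ as needed gives the claimed ratio. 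The symmetric formula follows by the $0\leftrightarrow1$ swap. Finally item~\ref{corollary:estimatesForGibbsMeasure_5} is immediate: divide the formula $\mu_\beta[0]/\mu_\beta[01] = \tilde F_\beta^1/F_\beta^1$ from item~\ref{corollary:estimatesForGibbsMeasure_2} by the formula $\mu_\beta[1]/\mu_\beta[10] = \tilde F_\beta^0/F_\beta^0$ from item~\ref{corollary:estimatesForGibbsMeasure_3} and use $\mu_\beta[01] = \mu_\beta[10]$ from item~\ref{corollary:estimatesForGibbsMeasure_1}.

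The only genuinely delicate point — and the step I would be most careful about — is the bookkeeping of the $\lambda_\beta$-powers and the repeated use of the characteristic equation $F_\beta^0(\lambda_\beta)F_\beta^1(\lambda_\beta) = 1$ to convert an unwanted $F_\beta^0$ into a $1/F_\beta^1$ (or vice versa) so that the final expressions come out in the advertised asymmetric form; a secondary point is justifying the interchange of summation in $\mu_\beta[0] = \sum_{n\geq1}\mu_\beta[0^n1]$, which is legitimate since all terms are nonnegative and the total is $\mu_\beta[0]\leq1$. Everything else is a routine substitution once the constancy of $\Phi_\beta$ on the relevant cylinders and the Jacobian formulas are in hand.
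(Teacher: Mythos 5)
Your overall strategy is exactly the intended one (the paper leaves this corollary without an explicit proof, as a direct consequence of Proposition~\ref{proposition:BasicFormulas} and Corollary~\ref{corollary:GibbsMeasureEstimates}): write $\mu_\beta[C]=\tfrac{1}{Z_\beta}\Phi_\beta|_C\,\nu_\beta[C]$ on each cylinder where $\Phi_\beta$ is constant, substitute the explicit values of $\Phi_\beta$ and $\nu_\beta$, and clean up with the characteristic equation. Items \ref{corollary:estimatesForGibbsMeasure_1}, \ref{corollary:estimatesForGibbsMeasure_2}, \ref{corollary:estimatesForGibbsMeasure_3} and \ref{corollary:estimatesForGibbsMeasure_5} are argued correctly, including the double-sum interchange $\sum_{n\ge1}\sum_{k\ge n}=\sum_{k\ge1}k\,(\cdot)$ that produces $\tilde F_\beta^1$. (For completeness, the claim that $\{0^\infty\}$ is null follows from $\nu_\beta(\{0^\infty\})=\lambda_\beta\,\nu_\beta(\{0^\infty\})$ and $\lambda_\beta>1$.)

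There is, however, a genuine error in your treatment of item~\ref{corollary:estimatesForGibbsMeasure_4}. You assert that $\Phi_\beta$ equals $\Phi_\beta(10)$ on $[01^n0]$; this is false. Since $[01^n0]\subset[01]$ and $\Phi_\beta$ is constant on the cylinder $[01]=[0^11]$, the constant value of $\Phi_\beta$ on $[01^n0]$ is $\Phi_\beta(01)$. This is not a cosmetic point: the factor $F_\beta^1(\lambda_\beta)$ in the claimed identity comes precisely from the ratio $\Phi_\beta(01)/\Phi_\beta(10)=F_\beta^1(\lambda_\beta)$ (item~\ref{corollary:GibbsMeasureEstimates_2} of Corollary~\ref{corollary:GibbsMeasureEstimates}). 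If you carry out your computation literally with $\Phi_\beta(10)$ in the numerator, you get $\mu_\beta[01^n0]/\mu_\beta[10]=\exp(-\beta H_n^0)/\lambda_\beta^n$ with no $F_\beta^1$ factor, and your proposed fix of ``replacing $\nu_\beta[10]=F_\beta^1(\lambda_\beta)\nu_\beta[01]$ as needed'' cannot restore it: substituting $\nu_\beta[01]=F_\beta^0(\lambda_\beta)\nu_\beta[10]$ back in just gives $F_\beta^1F_\beta^0=1$ by the characteristic equation. Your parenthetical justification (``$[01^n0]\subset[1^n0]$ after one shift'') also does not help, since $\sigma([01^n0])=[1^n0]$ constrains $\Phi_\beta\circ\sigma$, not $\Phi_\beta$, on that cylinder. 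The correct computation is
\begin{equation*}
\frac{\mu_\beta[01^n0]}{\mu_\beta[10]}
=\frac{\Phi_\beta(01)\,\nu_\beta[01^n0]}{\Phi_\beta(10)\,\nu_\beta[10]}
=F_\beta^1(\lambda_\beta)\,\frac{\exp(-\beta H_n^0)}{\lambda_\beta^n},
\end{equation*}
and symmetrically for $[10^n1]$. With this correction the whole argument goes through.
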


We know that $\lambda_\beta \to 1$ as $\beta \to +\infty$. In order to understand the behavior of $\mu_\beta$, it is fundamental to have a better Puiseux series expansion of $\lambda_\beta$, as it is done for potentials that depend on finite number of coordinates (see \cite{GaribaldiThieullen2012}). The log-scale limit, the limit of $-\frac{1}{\beta} \ln(\lambda_\beta-1)$,  is usually easy to obtain using a min-plus technique. This may be sufficient to show the convergence of $\mu_\beta$ when there is no coincidence of exponents, as it happens in \cite{BaravieraLopesMengue2013_01}. Usually the limit is then a periodic measure.  In general, the log-scale limit is not sufficient and an expansion of the form $\lambda_\beta = 1 + c e^{-\beta \gamma} + o(e^{-\beta\gamma})$ needs to be founded as in \cite{BaravieraLeplaideurLopes2012,Leplaideur2012}. A barycenter of periodic measures with irrational coefficients may be the limit in this case.
Let us recall from equation \eqref{equation:threeExponents} the definition of the key parameter $ \gamma $, which we call from now on the Puiseux exponent:
\[
\gamma := \min \Big\{\frac{1}{2} \big( H_\infty^1 + H_\infty^0 \big), \,  H_{min}^0 + H_\infty^1, \, H_{min}^1 + H_\infty^0\Big\}.
\]
The coincidence of exponents is understood in the sense that the minimum $\gamma$ may be attained several times. The following proposition gives the log-scale limit of the main quantities that appear in the dichotomy. We will give better estimates in the next section.

\begin{proposition} \label{proposition:second_exponent}
Let $H$ be a reduced double-well type potential. Then
\begin{enumerate} 
\item \label{item:1_second_exponent} $\displaystyle\lim_{\beta\to+\infty} -\frac{1}{\beta} \ln(\lambda_\beta - 1) = \gamma$; 
\item \label{item:2_second_exponent} $\displaystyle \lim_{\beta \to +\infty}-\frac{1}{\beta}\ln F_\beta^{0,1}(\lambda_\beta) 
= \min_{n\geq1}\big\{H_n^{0,1},\, H_\infty^{0,1}-\gamma \big\}$;
\item \label{item:3_second_exponent} $\displaystyle \lim_{\beta \to +\infty} -\frac{1}{\beta} \ln \tilde F_\beta^{0,1}(\lambda_\beta)
= \min_{n\geq1} \big\{H_n^{0,1},\, H_\infty^{0,1} - 2\gamma \big\}$.
\end{enumerate}
\end{proposition}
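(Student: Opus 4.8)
The plan is to derive all three limits from a single fact: $\lambda_\beta \to 1$ as $\beta \to +\infty$ (part 4 of Theorem~\ref{theorem:RuelleOperator} together with the fact that the limit measure is supported on the fixed points, whence $\bar H = 0$ and $\lambda_\beta = e^{-\beta \bar H_\beta} \to 1$), combined with the elementary summations~\eqref{equation:simpleSeries}. First I would prove item~\ref{item:2_second_exponent}, the log-scale asymptotics of $F_\beta^{0,1}(\lambda_\beta)$, \emph{as a function of the still-unknown quantity} $-\frac{1}{\beta}\ln(\lambda_\beta-1)$. Writing $\lambda_\beta = 1 + \epsilon_\beta$ with $\epsilon_\beta \to 0^+$, one splits $F_\beta^0(\lambda_\beta) = \sum_{k \ge 1} \lambda_\beta^{-k} e^{-\beta H_k^0}$ into a "finite head" and a "geometric tail": since $H_k^0 \to H_\infty^0$ and $\sum_k \sup_{n}|H_k^0 - H_{k+n}^0| < +\infty$, the tail $\sum_{k \ge N} \lambda_\beta^{-k} e^{-\beta H_k^0}$ is comparable (up to a factor $e^{\pm \beta o_N(1)}$) to $e^{-\beta H_\infty^0}\sum_{k \ge N}\lambda_\beta^{-k} = e^{-\beta H_\infty^0}/(\lambda_\beta^{N-1}(\lambda_\beta-1))$ by the first identity in~\eqref{equation:simpleSeries}, which on the log-scale contributes $\min\{H_\infty^0\} - \bigl(-\frac1\beta\ln(\lambda_\beta-1)\bigr) + o(1)$, while each finite term $\lambda_\beta^{-k}e^{-\beta H_k^0}$ contributes $H_k^0 + o(1)$ on the log-scale because $\lambda_\beta^{-k} \to 1$ for fixed $k$. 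Taking the log of a sum of positive terms is the min of the logs up to $O(\log(\text{number of terms})/\beta) = o(1)$, so
\[
-\frac1\beta \ln F_\beta^{0,1}(\lambda_\beta) \;\to\; \min_{n \ge 1}\Bigl\{ H_n^{0,1},\; H_\infty^{0,1} - \eta \Bigr\}, \qquad \eta := \lim -\tfrac1\beta\ln(\lambda_\beta-1),
\]
once we know the limit $\eta$ exists (along subsequences, then argue uniqueness). The same argument with an extra factor $k$ and the second identity in~\eqref{equation:simpleSeries} gives $-\frac1\beta\ln\tilde F_\beta^{0,1}(\lambda_\beta) \to \min_{n\ge1}\{H_n^{0,1}, H_\infty^{0,1} - 2\eta\}$, since $\tilde F$'s tail behaves like $e^{-\beta H_\infty^0}(\lambda_\beta-1)^{-2}$ up to polynomial-in-$N$ and $o(\beta)$ factors — this is item~\ref{item:3_second_exponent} modulo identifying $\eta = \gamma$.

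It then remains to show $\eta = \gamma$, which is item~\ref{item:1_second_exponent} and simultaneously the crux of the proposition. The input is the characteristic equation $F_\beta^0(\lambda_\beta) F_\beta^1(\lambda_\beta) = 1$ from Corollary~\ref{corollary:GibbsMeasureEstimates}\eqref{corollary:GibbsMeasureEstimates_1}. Taking $-\frac1\beta\ln$ of both sides and feeding in the formula just obtained yields the self-consistent equation
\[
\min_{n\ge1}\bigl\{H_n^0, H_\infty^0 - \eta\bigr\} \;+\; \min_{n\ge1}\bigl\{H_n^1, H_\infty^1 - \eta\bigr\} \;=\; 0 .
\]
Now one analyzes this equation by cases according to which branch realizes each minimum. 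If both minima are realized by their respective "tail" branches, the equation reads $(H_\infty^0 - \eta) + (H_\infty^1 - \eta) = 0$, i.e. $\eta = \frac12(H_\infty^0 + H_\infty^1)$, valid precisely when $\eta \le H_{min}^0$ and $\eta \le H_{min}^1$, equivalently $\frac12(H_\infty^0+H_\infty^1) \le \min\{H_{min}^0, H_{min}^1\}$. If instead the $F^0$-minimum is realized by a head term — necessarily $H_{min}^0$, so $H_{min}^0 \le H_\infty^0 - \eta$ — then the first summand equals $H_{min}^0$, forcing $\min_{n}\{H_n^1, H_\infty^1 - \eta\} = -H_{min}^0 \le 0$; since $H_n^1 > 0$ this forces the $F^1$-side onto its tail branch with $H_\infty^1 - \eta = -H_{min}^0$, i.e. $\eta = H_{min}^0 + H_\infty^1$ (consistent with $H_{min}^0 \le H_\infty^0 - \eta$, i.e. $H_{min}^0 + H_\infty^1 \le \tfrac12(\cdots)$ — need to check this is exactly the complementary region). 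The symmetric case gives $\eta = H_{min}^1 + H_\infty^0$. Checking that these three regimes are exhaustive, mutually consistent with their defining inequalities, and glue to $\eta = \min\{\tfrac12(H_\infty^0+H_\infty^1),\, H_{min}^0 + H_\infty^1,\, H_{min}^1 + H_\infty^0\} = \gamma$ is a short but careful case check; I would present it as: the right-hand side above is a continuous, nonincreasing, piecewise-linear function of $\eta$ that is positive at $\eta = 0$ (both minima are then $>0$, or one is $0$ only in the degenerate $H_\infty = 0$ case where $\gamma = 0$ too) and eventually negative, hence has a unique zero, and one verifies $\gamma$ is that zero by plugging it in.

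The main obstacle is twofold and lies in rigor rather than ideas. First, the log-scale estimate for $F_\beta^{0,1}$ must be made uniform: the "$o(1)$" errors from replacing $H_k^0$ by $H_\infty^0$ in the tail and from $\lambda_\beta^{-k} \approx 1$ in the head have to be controlled \emph{before} $\eta$ is known to exist, so I would first extract a convergent subsequence $\beta_j$ along which $-\frac1{\beta_j}\ln(\lambda_{\beta_j}-1) \to \eta^*$ (using $\lambda_\beta - 1 \in (0,1)$ and monotone-type bounds, e.g. $\lambda_\beta - 1 \ge e^{-\beta\|H\|}$ for a crude lower bound so $\eta^* < \infty$), run the argument to get $\eta^* = \gamma$ from the characteristic equation, and conclude every subsequential limit is $\gamma$, hence the full limit exists. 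Second, the head-vs-tail splitting requires choosing the cutoff $N = N(\beta)$ correctly: $N$ must be large enough that $\sup_{k \ge N}|H_k^{0,1} - H_\infty^{0,1}|$ is small (possible since the variation is summable), yet the head $\sum_{k < N}$ must still contribute only $\min_{k<N}H_k^{0,1} + o(1)$ on the log scale, which forces $N = o(\beta)$ or a fixed-$N$-then-$N\to\infty$ double limit; I expect the cleanest route is to fix $N$, get $\min_{1 \le n \le N}\{H_n^{0,1}\} \wedge (H_\infty^{0,1} - \eta \pm \delta_N)$ for every $N$, and then send $N \to \infty$ using $\delta_N \to 0$ and $\inf_n H_n^{0,1} = H_{min}^{0,1}$ attained (or approached) among the $H_n$'s. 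None of this is deep, but it is where the write-up has to be careful; the conceptual content is entirely in the self-consistent min-plus equation extracted from $F_\beta^0 F_\beta^1 = 1$.
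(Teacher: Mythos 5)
Your proposal is correct and follows essentially the same route as the paper: a priori bounds showing every subsequential limit $\eta$ of $-\frac{1}{\beta}\ln(\lambda_\beta-1)$ is finite, a head/tail splitting of $F_\beta^{0,1}$ (fixed cutoff $N=N(\epsilon)$, geometric tail via~\eqref{equation:simpleSeries}, then $\epsilon\to0$) giving item~\ref{item:2_second_exponent} in terms of $\eta$, the min-plus equation from $F_\beta^0F_\beta^1=1$ pinning down $\eta=\gamma$ uniquely, and the same splitting with the extra factor $k$ for item~\ref{item:3_second_exponent}. The only cosmetic difference is your phrasing of the uniqueness step as a monotonicity/unique-zero argument where the paper lists the three equivalent linear equations; both are sound.
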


\begin{proof} \

{\it Part 1.} We claim that any limit point of $-\frac{1}{\beta}\ln(\lambda_\beta -1)$ is finite. 
Recall that $H$ is nonnegative and $ \max \Phi_\beta = 1 $.
Hence, given $ x_\beta^\text{max} \in \argmax \Phi_\beta $, we see that 
$ \lambda_\beta = \mathcal L_\beta[\Phi_\beta](x_\beta^\text{max}) \le 2 $. Since 
$ \lambda_\beta \Phi_\beta(0^\infty) = \mathcal L_\beta[\Phi_\beta](0^\infty) $ yields 
$ \lambda_\beta = 1 + \exp(-\beta H_\infty^1) \Phi_\beta(10^\infty) / \Phi_\beta(0^\infty) \ge 1 $,  
we have the \emph{a priori} estimate $1\leq \lambda_\beta \leq 2$. Furthermore, from
\[
\frac{\exp(-\beta \max_k H_k^0)}{\lambda_\beta - 1} \le F_\beta^0(\lambda_\beta) = \frac{1}{F_\beta^1(\lambda_\beta)} \le \frac{\lambda_\beta - 1}{\exp(-\beta \max_k H_k^1)},
\]
we conclude that $ \exp\big(\!-\beta(\max H_k^0 + \max H_k^1)/2 \big) \le \lambda_\beta - 1 \le 1 $.

{\it Part 2.} For some subsequence $\beta \to +\infty$, assume $-\frac{1}{\beta}\ln(\lambda_\beta -1) \to \bar\gamma$. 
We claim that $-\frac{1}{\beta}\ln F_\beta^0(\lambda_\beta) \to \min_{n\geq1}(H_n^0,H_\infty^0-\bar\gamma)$ for the same subsequence. 
Indeed, let $\epsilon>0$. We choose $N\geq1$ such that $|H_n^0-H_\infty^0| <  \epsilon$  for all $n\geq N$. We split the series \eqref{equation:F} in two terms. For the first term, for $\beta$ large enough
\[
\exp(-\beta(\min_{1 \leq k \leq N} H_k^0+\epsilon)) \leq \sum_{k=1}^N \frac{1}{\lambda_\beta^k} \exp(-\beta H_k^0)  \leq \exp(-\beta(\min_{1 \leq k \leq N} H_k^0-\epsilon)).
\]
For the second term,  using the estimates \eqref{equation:simpleSeries}, for $ \beta $ large enough
\begin{gather*}
\exp(-\beta(\bar \gamma+ \epsilon)) \leq \lambda_\beta^N(\lambda_\beta -1) \leq \exp(-\beta (\bar\gamma-\epsilon)), \\
\frac{\exp(-\beta(H_\infty^0+\epsilon))}{\lambda_\beta^N(\lambda_\beta-1)} \leq \sum_{k > N} \frac{1}{\lambda_\beta^k} \exp(-\beta H_k^0) \leq  \frac{\exp(-\beta(H_\infty^0-\epsilon))}{\lambda_\beta^N(\lambda_\beta-1)}, \\
\exp(-\beta(H_\infty^0-\bar\gamma+2\epsilon)) \leq \sum_{k > N} \frac{1}{\lambda_\beta^k} \exp(-\beta H_k^0)  
\leq \exp(-\beta(H_\infty^0-\bar\gamma-2\epsilon)).
\end{gather*}
The claim is proved  by adding the two terms, changing the scale and passing to the limits as $\beta  \to +\infty$ and $\epsilon \to 0$.

{\it Part 3.} We show there is a unique limit point $\bar\gamma$ by showing that it is the unique solution of a min-plus equation. 
Indeed, from the characteristic equation $1=F_\beta^0(\lambda_\beta) F_\beta^1(\lambda_\beta)$, we obtain
\[
0 = \min_{n\geq1}\{H_n^0,H_\infty^0-\bar\gamma\} + \min_{n\geq1}\{H_n^1,H_\infty^1-\bar\gamma\}.
\]
This equation is equivalent to
\begin{equation*}
\min_{n\geq1} H_n^0 + H_\infty^1-\bar \gamma = 0 \ \text{ or } \ \min_{n\geq1} H_n^1 + H_\infty^0-\bar\gamma = 0 \ \text{ or } \ 
H_\infty^0+ H_\infty^1-2\bar\gamma=0.
\end{equation*}
We have shown that $\bar \gamma$ is the Puiseux exponent $ \gamma $.

{\it Part 4.} We prove  item \ref{item:3_second_exponent} similarly as in part 2. We choose $\epsilon>0$ and $N \ge 1$ as before. The first part of the series \eqref{equation:Ftilde} satisfies
\[
\lim_{\beta \to +\infty} -\frac{1}{\beta} \ln \sum_{k=1}^N
 \frac{k}{\lambda_\beta^k} \exp(-\beta H_k^0) = \min_{1 \leq k \leq N} H_k^0.
\]
 Using again the estimate \eqref{equation:simpleSeries}, for $ \beta $ large enough, the remaining part gives
\begin{gather*}
\exp(-\beta(2\gamma+\epsilon)) \leq \frac{\lambda_\beta^N(\lambda_\beta-1)^2}{N(\lambda_\beta-1)+\lambda_\beta} 
\leq \exp(-\beta(2\gamma-\epsilon)), \\
\exp(-\beta(H_\infty^0-2\gamma+2\epsilon)) \leq \sum_{k > N} \frac{k}{\lambda_\beta^k} \exp(-\beta H_k^0) 
\leq \exp(-\beta(H_\infty^0-2\gamma-2\epsilon)). \qedhere
\end{gather*}
\end{proof}

\begin{corollary} \label{proposition:representationFormula}
Let  $H$ be a reduced double-well type potential and $V$ be a calibrated sub-action. Then  $V$  is constant on every cylinders of the form $[0^n1]$ and $[1^n0]$ where $n\geq1$. More precisely,
\begin{enumerate}
\item \label{item:representationFormula_1} $\displaystyle V(x) =  \min \big\{ V(0^\infty), V(1^\infty)+ \inf_{k \geq n} H_k^1 \big\}$, \ $\forall \, x \in [0^n1]$, 
\item \label{item:representationFormula_2} $\displaystyle V(x) =\min \big\{ V(1^\infty), V(0^\infty)+  \inf_{k \geq n}  H_k^0 \big\}$, \ $\forall \, x \in [1^n0]$.
\end{enumerate}
In particular, $\min V = \min\{V(0^\infty),V(1^\infty)\}$. With respect to $\Phi_\beta=e^{-\beta V_\beta}$ the eigenfunction used 
in theorem~\ref{theorem:RuelleOperator} to ensure the existence of calibrated sub-actions, we have the following complementary information. 
\begin{enumerate}
\setcounter{enumi}{2}
\item \label{item:representationFormula_3} If $\gamma>0$ and $H_\infty^1 \geq H_\infty^0$, then $\{V_\beta\}$ converges uniformly to the calibrated sub-action $V_\infty$ characterized by
\begin{align*}
&V_\infty(x) = \min\{H_\infty^1 - \gamma, \inf_{k\geq n}H_k^1\}, \quad \forall \, x \in [0^n1], \ \forall \, n\geq1, \\ 
&V_\infty(x)  = 0, \quad \forall \, x \in [1].
\end{align*}
\item \label{item:representationFormula_4}  If  $\gamma=0$, then  $\{V_\beta\}$ converges uniformly to  $ 0$, which  is the unique calibrated sub-action satisfying $\min V =0$.
\end{enumerate}

\end{corollary}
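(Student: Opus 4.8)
The plan is to treat the purely variational identities (items~1, 2, and the resulting formula $\min V = \min\{V(0^\infty), V(1^\infty)\}$) first, using only Proposition~\ref{corollary:RepresentationFormula}, the fact that $\text{\rm Mather}(H) = \{0^\infty, 1^\infty\}$, and the explicit Peierls barrier computed in the previous proposition. Indeed, since any calibrated sub-action $V$ satisfies the representation formula $V(y) = \min\{V(0^\infty) + h(0^\infty, y), \, V(1^\infty) + h(1^\infty, y)\}$, I would simply substitute the values $h(0^\infty, x) = 0$ for $x \in [0]$ (hence for $x \in [0^n1]$), $h(1^\infty, x) = \inf_{k\geq n} H_k^1$ for $x \in [0^n1]$, and symmetrically for $x \in [1^n0]$. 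This gives items~1 and~2 directly; that $V$ is constant on each such cylinder is then immediate from the formulas, and $\min V = \min\{V(0^\infty), V(1^\infty)\}$ follows by letting $n \to \infty$ (the infimum over the barrier terms tends to $H_\infty^{0,1} \geq 0$). This part is routine bookkeeping.

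The substantive part is identifying the specific limit $V_\infty$ of the family $\{V_\beta = -\frac{1}{\beta}\ln\Phi_\beta\}$ in items~3 and~4. The key input is the log-scale asymptotics already established in Proposition~\ref{proposition:second_exponent}: from items~\ref{item:BasicFormulas_1}–\ref{item:BasicFormulas_2} of Proposition~\ref{proposition:BasicFormulas} one has
\[
\Phi_\beta(0^n1) = \Big[\sum_{k\geq n} \frac{\exp(-\beta H_k^1)}{\lambda_\beta^{k-n+1}}\Big]\Phi_\beta(10), \qquad \Phi_\beta(1^n0) = \Big[\sum_{k\geq n}\frac{\exp(-\beta H_k^0)}{\lambda_\beta^{k-n+1}}\Big]\Phi_\beta(01),
\]
so that $V_\beta$ on these cylinders is recovered by taking $-\frac1\beta\ln$ of the right-hand sides. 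Applying exactly the splitting-of-the-series argument from the proof of Proposition~\ref{proposition:second_exponent} (split at an index $N$ past which $|H_k^{0,1} - H_\infty^{0,1}| < \epsilon$, use the geometric estimates \eqref{equation:simpleSeries}, and recall $-\frac1\beta\ln(\lambda_\beta - 1) \to \gamma$) yields
\[
\lim_{\beta\to+\infty} -\frac1\beta\ln\Big[\sum_{k\geq n}\frac{\exp(-\beta H_k^1)}{\lambda_\beta^{k-n+1}}\Big] = \min\big\{\inf_{1\leq k < n}H_k^1, \, H_\infty^1 - \gamma\big\} = \min\big\{\inf_{k\geq n}H_k^1, \, H_\infty^1-\gamma\big\},
\]
the last equality because, when $H_\infty^1 \geq H_\infty^0$ and $\gamma > 0$, one checks $\gamma = \min\{\tfrac12(H_\infty^0+H_\infty^1), H_{min}^1 + H_\infty^0\} \leq H_\infty^1$, so $H_\infty^1 - \gamma \geq 0$ and adding the tail indices $k \geq n$ with value $\geq H_\infty^1 - \epsilon$ does not lower the minimum below $H_\infty^1 - \gamma$; symmetrically for the $0$-cylinders one gets $\min\{\inf_{k\geq n}H_k^0, H_\infty^0 - \gamma\}$. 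The remaining step is to pin down the normalization: since $\max\Phi_\beta = 1$, i.e. $\min V_\beta = 0$, and since $\min V_\beta = \min\{V_\beta(0^\infty), V_\beta(1^\infty)\}$, I must show that the limit achieves its minimum $0$ on the cylinder $[1]$ (equivalently $V_\infty(1^\infty) = 0 \leq V_\infty(0^\infty)$), which under the assumption $H_\infty^1 \geq H_\infty^0$ amounts to $H_\infty^0 - \gamma \leq H_\infty^1 - \gamma$, clearly true; one also verifies $V_\infty(0^\infty) = H_\infty^1 - \gamma \geq 0$. Uniform convergence of the full family (not just a subsequence) follows because $\{V_\beta\} \subset \{V \in \mathbb{K} : \min V = 0\}$ is compact and every accumulation point is a calibrated sub-action (Theorem~\ref{theorem:RuelleOperator}), while the explicit log-scale limits just computed show every accumulation point equals the same $V_\infty$; hence uniqueness of the limit. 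Finally, that this $V_\infty$ is indeed a calibrated sub-action can either be inherited from Theorem~\ref{theorem:RuelleOperator} or checked directly against items~1–2 with $V(0^\infty) = H_\infty^1 - \gamma$, $V(1^\infty) = 0$.

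For item~4 ($\gamma = 0$), by \eqref{equation:threeExponents} $\gamma = 0$ forces $H_\infty^0 = H_\infty^1 = 0$, hence in the reduced setting $a_n^0 = a_n^1 = 0$ and $H_n^{0,1} \to 0$, so $\inf_{k\geq n}H_k^{0,1} \to 0$; the formulas of items~1–2 give, for any calibrated sub-action with $\min V = 0$, that $0 \leq V(0^\infty), V(1^\infty)$ and, letting $n\to\infty$ in item~1, $V(0^\infty) \leq V(1^\infty) + 0$ together with (item~2) $V(1^\infty) \leq V(0^\infty)$, forcing $V(0^\infty) = V(1^\infty)$, which equals $0$ by $\min V = 0$; then items~1–2 force $V \equiv 0$. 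The convergence $V_\beta \to 0$ uniformly is then automatic from compactness plus this uniqueness. The main obstacle is the bookkeeping in the displayed log-scale limit: one must carefully verify that truncating the series at $N$ and including the finitely many initial indices $1 \leq k < n$ produces exactly $\min\{\inf_{k\geq n}H_k^{0,1}, H_\infty^{0,1} - \gamma\}$ and that the $o(1)$ errors in $\lambda_\beta - 1 = e^{-\beta\gamma + o(\beta)}$ are harmless — this is where a clean reuse of the Proposition~\ref{proposition:second_exponent} estimates, rather than a fresh computation, keeps the argument short.
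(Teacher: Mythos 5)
Your proposal follows essentially the same route as the paper: items~1--2 come from the representation formula~\eqref{formula representacao} combined with the explicit Peierls barrier, and items~3--4 from the log-scale asymptotics of Proposition~\ref{proposition:second_exponent} together with compactness of $\{V\in\mathbb{K}:\min V=0\}$ and uniqueness of the accumulation point (which must be a calibrated sub-action, hence satisfy items~1--2). The differences are cosmetic --- the paper pins down only $V_\infty(0^\infty)$ and $V_\infty(1^\infty)$ and lets items~1--2 determine the rest, whereas you compute the limit on every cylinder; note the slip $\inf_{1\le k<n}H_k^1$ in your displayed limit (it should be $\inf_{n\le k\le N}H_k^1$ before letting $N\to\infty$), and that the identity $\min V_\beta=\min\{V_\beta(0^\infty),V_\beta(1^\infty)\}$ is only justified for accumulation points, not at finite $\beta$, which is how you in fact use it.
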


\begin{proof} \

{\it Part 1.} Items~\ref{item:representationFormula_1} to~\ref{item:representationFormula_2} are consequences of the representation 
formula~\eqref{formula representacao}.

{\it Part 2.} If $H_\infty^1 \geq H_\infty^0$, then $H_\infty^1 + H_\infty^0 - 2\gamma \geq 0 \ge H_\infty^0 - \gamma $. 
Item~\ref{item:BasicFormulas_1} of proposition~\ref{proposition:BasicFormulas}, item~\ref{corollary:GibbsMeasureEstimates_2} 
of corollary~\ref{corollary:GibbsMeasureEstimates} and items~\ref{item:1_second_exponent} and~\ref{item:2_second_exponent} 
of proposition~\ref{proposition:second_exponent} imply
\[
\lim_{\beta \to +\infty} \big[V_\beta(0^\infty) - V_\beta(01)\big] = H_\infty^1 + H_\infty^0 - 2\gamma \geq 0.
\]
From~item~\ref{item:BasicFormulas_2} of proposition~\ref{proposition:BasicFormulas} and item~\ref{item:1_second_exponent} 
of proposition~\ref{proposition:second_exponent}, we have
\[
\lim_{\beta \to +\infty} \big[V_\beta(1^\infty) - V_\beta(01)\big] = H_\infty^0 - \gamma \le 0.
\]
Therefore, we obtain
\[
\lim_{\beta \to +\infty} \big[V_\beta(0^\infty) - V_\beta(1^\infty)\big] = H_\infty^1 - \gamma \geq 0.
\]
Let $V_\infty$ be any accumulation function of $\{V_\beta\}$. Then $V_\infty$ is a calibrated sub-action and, in particular, 
satisfies items~\ref{item:representationFormula_1} and~\ref{item:representationFormula_2} already proved. 
Thus, since $\min V_\infty =0$, necessarily $V_\infty(1^\infty)=0$ and $V_\infty(0^\infty)=H_\infty^1-\gamma$, so that the characterization
given in item~\ref{item:representationFormula_3} is proved. Being the limit function uniquely defined, we have actually showed that 
$V_\beta \to V_\infty$ uniformly.

{\it Part 3.} If $\gamma=0$, then  $H_\infty^0= H_\infty^1=0$. Let $V_\infty$ be any accumulation function of $\{V_\beta\}$. 
Then $V_\infty$ is a calibrated sub-action. By passing to the limit as $n\to+\infty$ in items \ref{item:representationFormula_1} and \ref{item:representationFormula_2}, we obtain $V_\infty(0^\infty)=V_\infty(1^\infty)$. Since $\min V_\infty =0$, $V_\infty$ is necessarily the null function. By uniqueness of the accumulation function, we have proved that $V_\beta \to V_\infty$ uniformly.
\end{proof}

\section{The selection case}
\label{section:selectionCase}

We assume  that $H$ is reduced and that $\gamma>0 $, which is equivalent to $ \max \{H_\infty^0,H_\infty^1\}>0$.
We also suppose that $H_\infty^0 \leq H_\infty^1$  (the opposite case is similar). 
In particular, $H_\infty^1>0$. We know that the  only accumulation points of $\mu_\beta$ are barycenters $c_0\delta_{0^\infty}+c_1\delta_{1^\infty}$. Our goal is to find an equivalent of $\mu_\beta[0]/\mu_\beta[1]$  as $\beta\to+\infty$ and therefore to prove the convergence of $\mu_\beta$.

\begin{proof}[Proof of item \ref{item:mainTheorem_0} of Theorem \ref{theorem:main}]
Assume $ \frac{1}{2}\big( H_\infty^1 + H_\infty^0 \big) > H_{min}^1+H_\infty^0$. Then $\gamma = H_{min}^1+H_\infty^0 > 0$ since $H_{min}^1=0 \ \Leftrightarrow \ H_\infty^1=0$. We will see that it is enough to estimate the quotient of the measures at the log-scale. Proposition~\ref{proposition:second_exponent} implies
\begin{align*}
&\lim_{\beta \to +\infty} -\frac{1}{\beta}\ln F_\beta^0(\lambda_\beta) = \min\{H_{min}^0,H_\infty^0-\gamma\} =H_\infty^0-\gamma, \\
&\lim_{\beta \to +\infty} -\frac{1}{\beta}\ln \tilde F_\beta^0(\lambda_\beta) =  \min\{H_{min}^0,H_\infty^0-2\gamma\} = H_\infty^0-2\gamma, \\
&\lim_{\beta \to +\infty} -\frac{1}{\beta}\ln \tilde F_\beta^1(\lambda_\beta) =  \min\{H_{min}^1,H_\infty^1-2\gamma\}.
\end{align*}
The estimate for $F_\beta^1$ is obtained from the characteristic equation. Thus
\begin{align*}
\lim_{\beta \to +\infty} -\frac{1}{\beta} \ln \Big( \frac{\mu_\beta[0]}{\mu_\beta[1]} \Big) &= \lim_{\beta \to +\infty} -\frac{1}{\beta} \ln \Big( \frac{ F_\beta^0(\lambda_\beta)}{F_\beta^1(\lambda_\beta)} \frac{\tilde F_\beta^1(\lambda_\beta)}{\tilde F_\beta^0(\lambda_\beta)} \Big), \\
&= H_\infty^0 + \min\{H_{min}^1,H_\infty^1-2\gamma\} > 0.
\end{align*}
We have proved that $\mu_\beta[0]/\mu_\beta[1] \to 0$ or $\mu_\beta \to \delta_{1^\infty}$.
\end{proof}

For the proof of  item~\ref{item:mainTheorem_1} of theorem~\ref{theorem:main}, the previous log-scale estimate is not enough. 
We need to develop an analytical  technique which gives  equivalents of the quantities $F_\beta^{0,1}(\lambda_\beta)$, $\tilde F_\beta^{0,1}(\lambda_\beta)$, and  $\lambda_\beta-1$.

We first need the following lemma on sequences.

\begin{lemma} \label{lemma:SimpleTauberian}
Let $\{H_n\}_{n\geq0}$ be a converging sequence satisfying 
\[
\sum_{n\geq0} \ \sup_{k\geq0} \ | H_n - H_{n+k} | < +\infty.
\]
Then $\lim_{n\to+\infty} (H_n-H_\infty) \ln(n) = 0$, where $H_\infty = \lim_{n\to+\infty} H_n$.
\end{lemma}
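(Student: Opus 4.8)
The plan is to reduce the statement to a classical fact about non-increasing summable sequences (Olivier's theorem). I introduce the error $ \delta_n := |H_n - H_\infty| $ and its monotone majorant $ D_n := \sup_{k\ge 0} \delta_{n+k} = \sup_{j\ge n}\delta_j $. By construction $ D_n \ge 0 $, $ D_n $ is non-increasing in $ n $, and $ D_n \to 0 $ since $ H_n \to H_\infty $. Moreover $ 0 \le \delta_n \ln(n) \le D_n \ln(n) $, so it suffices to prove $ D_n \ln(n) \to 0 $.

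The key point is that the hypothesis controls $ D_n $. Write $ \epsilon_n := \sup_{k\ge 0} |H_n - H_{n+k}| $ for the summand appearing in the assumption. Letting $ m\to+\infty $ in $ |H_n - H_{n+m}| \le \epsilon_n $ gives $ |H_n - H_\infty| \le \epsilon_n $. Hence, for every $ k\ge 0 $,
\[
\delta_{n+k} = |H_{n+k} - H_\infty| \le |H_{n+k} - H_n| + |H_n - H_\infty| \le 2\epsilon_n,
\]
and taking the supremum over $ k $ yields $ D_n \le 2\epsilon_n $. Therefore $ \sum_{n\ge 0} D_n \le 2\sum_{n\ge 0}\epsilon_n < +\infty $. (Note that $ \epsilon_n $ itself need not be monotone, which is why one passes to the majorant $ D_n $; and mere summability of $ \delta_n $, without the $ \sup_{k} $, would be too weak, as sparse spikes show.)

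It remains to check that a non-increasing nonnegative sequence $ (D_n) $ with $ \sum_n D_n < +\infty $ satisfies $ D_n \ln(n) \to 0 $. Since $ D $ is non-increasing, for $ n\ge 2 $ one has, with at least $ n/2 $ terms each $ \ge D_n $,
\[
\tfrac{n}{2}\, D_n \le \sum_{k=\lceil n/2\rceil}^{n} D_k,
\]
and the right-hand side is a tail of the convergent series $ \sum_k D_k $, hence tends to $ 0 $. Thus $ nD_n \to 0 $, and consequently $ D_n\ln(n) = (nD_n)\cdot \tfrac{\ln(n)}{n} \to 0 $. Combining with $ 0\le (H_n - H_\infty)\ln(n) $ in absolute value $ \le D_n\ln(n) $, we conclude $ (H_n - H_\infty)\ln(n)\to 0 $. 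There is no serious obstacle here: the only thing to get right is recognizing that the $ \sup_{k\ge 0} $ in the hypothesis is exactly what upgrades summability of the errors to summability of their monotone majorant, after which the half-range tail-sum argument (Olivier's theorem) finishes the proof.
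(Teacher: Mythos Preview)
Your proof is correct and takes a genuinely different route from the paper's. The paper works directly with $K_n := \sup_{k\ge 0}|H_n-H_{n+k}|$, asserts it is nonincreasing, and argues by contradiction: assuming $K_{N_i}\ln(N_i)\ge\epsilon$ along a subsequence, it uses monotonicity to bound $\sum_{N_i\le n<N_{i+1}}K_n$ from below by $(N_{i+1}-N_i)\,\epsilon/\ln(N_{i+1})$, and then shows the resulting convergence of $\sum_i[\ln(N_{i+1})-\ln(N_i)]$ contradicts $N_i\to\infty$. Your approach instead passes to the monotone majorant $D_n=\sup_{j\ge n}|H_j-H_\infty|$, observes $D_n\le 2\epsilon_n$ so that $\sum D_n<\infty$, and then invokes Olivier's theorem to get $nD_n\to 0$, which is strictly stronger than the stated $D_n\ln(n)\to 0$.

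What each buys: your argument is shorter, avoids the contradiction machinery, and yields the sharper decay $n\,|H_n-H_\infty|\to 0$. It also sidesteps a small wrinkle in the paper's presentation, namely that $K_n$ need not be nonincreasing in general (e.g.\ $H_0=1/2$, $H_1=0$, $H_2=1$, $H_n=0$ for $n\ge 3$ gives $K_0=1/2<K_1=1$); passing to the monotone envelope, as you do, is the clean fix. The paper's argument, on the other hand, is tailored to the $\ln(n)$ scale and does not rely on recognizing the connection to Olivier's theorem.
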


\begin{proof}
Denote $K_n :=  \sup_{k\geq0} \ | H_n - H_{n+k} |$ for all $ n \ge 0 $. Note then that $|H_n-H_\infty| \leq K_n$ and $\{K_n\}_{n\geq0}$ is a nonincreasing sequence converging to~$ 0 $ such that $\sum_{n\geq 0} K_n < +\infty$.
Assume by contradiction that there exist $\epsilon>0$ and a subsequence $N_i\to+\infty$ such that $K_{N_i}\ln(N_i) \geq  \epsilon$. Thanks to the nonincreasing property, we have
\[
\sum_{i\geq1} \frac{N_{i+1}-N_i}{\ln(N_{i+1})} \leq \frac{1}{\epsilon} \sum_{i\geq1} \sum_{N_i \leq n < N_{i+1}} K_{n} < + \infty.
\]
We thus observe that
\[
 \frac{1-N_{i}/N_{i+1}}{\ln(N_{i+1})/N_{i+1}} \to  0 \quad \Longrightarrow \quad \frac{N_i}{N_{i+1}}  \to 1,
\]
which implies, for every $i$ sufficiently large, 
\[
\frac{N_{i+1}-N_i}{\ln(N_{i+1})} = \frac{N_i}{\ln(N_{i+1})} \Big(\frac{N_{i+1}}{N_i}-1\Big)
\geq \frac{N_{i+1}}{N_i}-1 \geq \ln\Big(\frac{N_{i+1}}{N_i} \Big).
\]
But then $\sum_{i\geq1} [\ln(N_{i+1}) - \ln(N_i)] < +\infty$ contradicts $N_i \to +\infty$.
\end{proof}

From now on, we write $ f(\beta) \sim g(\beta) $ to indicate that the positive functions $ f $ and $ g $ are equivalent as 
$ \beta \to + \infty $. Besides, as usual $ f(\beta) \ll g(\beta) $ means that $ f $ is negligible with respect to $ g $ as 
$ \beta \to + \infty $.

\begin{proof}[Proof of item \ref{item:mainTheorem_1} of theorem \ref{theorem:main}]
Assume $0 < \frac{1}{2}\big( H_\infty^1 + H_\infty^0 \big) \leq H_{min}^1+H_\infty^0$. Then $\gamma =  \frac{1}{2}\big( H_\infty^0 + H_\infty^1 \big)$. We recall that the coincidence number $\kappa$ has been defined in \eqref{equation:coincidenceNumber} and the coefficient $c$ 
in \eqref{equation:PuiseuxCoefficient}. We will prove the following results:
\begin{align}
\begin{split}
&\lambda_\beta = 1 + c \exp(-\beta \gamma) + o(\exp(-\beta\gamma)),\\
&F_\beta^0(\lambda_\beta) \sim \frac{\exp(-\beta H_\infty^0)}{\lambda_\beta-1} \sim \frac{1}{c} \exp\Big( \beta \frac{H_\infty^1-H_\infty^0}{2} \Big), \\
&\tilde F_\beta^0(\lambda_\beta) \sim \frac{\exp(-\beta H_\infty^0)}{(\lambda_\beta-1)^2} \sim \frac{1}{c^2} \exp( \beta H_\infty^1), \\
&F_\beta^1(\lambda_\beta) \sim c \exp\Big( -\beta \frac{H_\infty^1-H_\infty^0}{2} \Big), \\
&\tilde F_\beta^1(\lambda_\beta) \sim \frac{\exp(-\beta H_\infty^1)}{(\lambda_\beta-1)^2} \sim \frac{1}{c^2} \exp( \beta H_\infty^0).
\end{split} \label{equation:selectionCase0}
\end{align}
Using item~\ref{corollary:estimatesForGibbsMeasure_5} of corollary~\ref{corollary:estimatesForGibbsMeasure}, 
we will obtain $\mu_\beta[0] / \mu_\beta[1] \to 1/c^2$  and  the convergence of the Gibbs measure as in~\eqref{equation:SelectionCase}. 

\medskip
{\it Part 1.} We determine an equivalent of $F_\beta^0(\lambda_\beta)$. If $H_k^0$ is constant and equal to $H_\infty^0$, we are done:
 \[
F_\beta^0(\lambda_\beta)= \frac{\exp(-\beta H_\infty^0)}{\lambda_\beta-1} \quad  \text{and} \quad  \tilde F_\beta^0(\lambda_\beta) = \frac{\exp(- \beta H_\infty^0)}{(\lambda_\beta-1)^2}.
\]
We may now assume that $H_k^0$ is not constant. Let $\epsilon>0$. For $\beta$ large enough, there exists a smallest positive integer $N_\beta$ 
such that 
\[
\beta |H_{N_\beta}^0 - H_\infty^0| \geq \epsilon, \quad\text{and}\quad \beta |H_k^0-H_\infty^0| \leq \epsilon, \ \forall \, k \geq N_\beta+1.
\]
Lemma~\ref{lemma:SimpleTauberian} implies that $|H_n^0-H_\infty^0| \ln (n) \to 0$. Since $|H_{N_\beta}^0-H_\infty^0| \geq \epsilon/\beta$, we obtain (even in the case $N_\beta$ is bounded with respect to $\beta$)
\begin{equation} \label{equation:selectionCase01}
\lim_{\beta \to +\infty} \frac{1}{\beta} \ln N_\beta = 0.
\end{equation}
Hence, we may show that
\begin{equation}  \label{equation:selectionCase02}
N_\beta(\lambda_\beta-1) \exp(-\beta H_{min}^0) \ll \exp(-\beta H_\infty^0) \quad \text{ and } \quad \lambda_\beta^{N_\beta} \to 1.
\end{equation}
For the first estimate,  by taking $-\frac{1}{\beta} \ln$ on both terms and using item~\ref{item:1_second_exponent} of 
proposition~\ref{proposition:second_exponent}, one has $\gamma + H_{min}^0 > H_\infty^0$ (according to the two cases: 
if  $H_\infty^1>H_\infty^0$ then $\gamma > H_\infty^0$, if $H_\infty^1=H_\infty^0$ then $H_{min}^0>0$). For the above limit, note that
\begin{gather*}
\frac{\lambda_\beta-1}{\exp(-\beta H_{min}^1)} \leq \frac{1}{F_\beta^1(\lambda_\beta)} = F_\beta^0(\lambda_\beta) 
\leq \frac{1}{\lambda_\beta-1}, \\
\lambda_\beta \leq 1+\exp(-\beta H_{min}^1/2), \quad \lambda_\beta^{N_\beta} \leq \exp\big(N_\beta\exp(-\beta H_{min}^1/2)\big).
\end{gather*}
As $H_{min}^1>0$, using \eqref{equation:selectionCase01} one gets  $N_\beta \ll \exp(\beta H_{min}^1/2)$ and $\lambda_\beta^{N_\beta} \to 1$. 

We are now able to compute an equivalent of $F_\beta^0(\lambda_\beta)$. We split the series $F_\beta^0(\lambda_\beta)$ in two parts and use~\eqref{equation:selectionCase02} to obtain, for $\beta$ sufficiently large,
\begin{gather*}
\frac{\exp(-\beta H_\infty^0-\epsilon)}{\lambda_\beta^{N_\beta}(\lambda_\beta-1)} \leq F_\beta^0(\lambda_\beta) \leq N_\beta \exp(-\beta H_{min}^0) + \frac{\exp(-\beta H_\infty^0+\epsilon)}{\lambda_\beta^{N_\beta}(\lambda_\beta-1)}, \\
\frac{\exp(-\beta H_\infty^0-2\epsilon)}{\lambda_\beta-1} \leq F_\beta^0(\lambda_\beta) \leq \frac{\exp(-\beta H_\infty^0+2\epsilon)}{\lambda_\beta-1}.
\end{gather*}
By taking $\epsilon \to 0$, we have just proved
\begin{equation} \label{equation:selectionCase03}
F_\beta^0(\lambda_\beta) \sim \frac{\exp(-\beta H_\infty^0)}{\lambda_\beta-1}.
\end{equation}

{\it Part 2.} We determine an equivalent of $\tilde F_\beta^0(\lambda_\beta)$. We use the same definition of $N_\beta$ as before and prove similarly the estimates
\begin{equation} \label{equation:selectionCase04}
N_\beta (\lambda_\beta-1) \ll 1, \quad N_\beta^2 (\lambda_\beta-1) ^2 \exp(-\beta H_{min}^0) \ll \exp(-\beta H_\infty^0).
\end{equation}
We split the series $\tilde F_\beta^0(\lambda_\beta)$ and use the computation \eqref{equation:simpleSeries} to obtain
\begin{multline*}
\frac{(N_\beta(\lambda_\beta-1)+\lambda_\beta)\exp(-\beta H_\infty^0-\epsilon)}{\lambda_\beta^{N_\beta}(\lambda_\beta-1)^2} 
\leq \tilde F_\beta^0(\lambda_\beta)  \\ 
\tilde F_\beta^0(\lambda_\beta)  \leq N_\beta^2 \exp(-\beta H_{min}^0) + \frac{(N_\beta(\lambda_\beta-1)+\lambda_\beta)\exp(-\beta H_\infty^0+\epsilon)}{\lambda_\beta^{N_\beta}(\lambda_\beta-1)^2}.
\end{multline*}
Using the estimates~\eqref{equation:selectionCase04}, one gets for $\beta$ sufficiently large
\[
\frac{\exp(-\beta H_\infty^0-2\epsilon)}{(\lambda_\beta-1)^2} \leq \tilde F_\beta^0(\lambda_\beta) \leq \frac{\exp(-\beta H_\infty^0+2\epsilon)}{(\lambda_\beta-1)^2}.
\]
Letting $\epsilon \to 0$, we have just proved
\begin{equation} \label{equation:selectionCase05}
\tilde F_\beta^0(\lambda_\beta) \sim \frac{\exp(-\beta H_\infty^0)}{(\lambda_\beta-1)^2}.
\end{equation}

{\it Part 3.} We determine an equivalent of $F_\beta^1(\lambda_\beta)$. As before we discuss two cases. If $H_k^1$ is constant and 
equal to $H_\infty^1$, the coincidence number~\eqref{equation:coincidenceNumber} is $ \kappa=0$ and the coefficient~\eqref{equation:PuiseuxCoefficient} is $c=1$. We immediately obtain 
\[
F_\beta^1(\lambda_\beta)= \frac{\exp(-\beta H_\infty^1)}{\lambda_\beta-1} \quad  \text{and} \quad  \tilde F_\beta^1(\lambda_\beta) = \frac{\exp(- \beta H_\infty^1)}{(\lambda_\beta-1)^2}.
\]
We may assume $H_k^1$ is not constant. For $ \beta $ large enough, we redefine $N_\beta$ as the smallest positive integer such that 
\[
\beta |H_{N_\beta}^1 - H_\infty^1| \geq \epsilon, \quad\text{and}\quad \beta |H_k^1-H_\infty^1| \leq \epsilon, \ \forall \, k \geq N_\beta+1.
\]
As before $\frac{1}{\beta} \ln N_\beta \ll 1$. Recall now that $ H_{min}^1 \ge \frac{1}{2}(H_\infty^1 - H_\infty^0) $. 
In the case $\kappa>0$, $H_{min}^1 < H_\infty^1$ and we introduce another exponent 
\[
H_{min}^{1*}:= \min \Big\{ H_k^1 \,:\, k \ \text{s.t.} \ H_k^1+H_\infty^0 \not= \frac{1}{2}\big( H_\infty^1+H_\infty^0 \big) \Big\} > H_{min}^1.
\]
In the case $\kappa=0$, by convention, $H_{min}^{1*} = H_{min}^1$. We show the first estimate
\begin{equation}  \label{equation:selectionCase06}
N_\beta (\lambda_\beta-1) \exp(-\beta H_{min}^{1*}) \ll \exp(-\beta H_\infty^1).
\end{equation}
Indeed, by taking $-\frac{1}{\beta} \ln$, it is enough to argue that $\gamma+H_{min}^{1*} > H_\infty^1$.  
In the case $\kappa >0$, $H_{min}^1 + H_\infty^0 = \frac{1}{2}(H_\infty^1+H_\infty^0) = \gamma$ and 
\[
\gamma  + H_{min}^{1*} >  \gamma  + H_{min}^1 = H_\infty^1.
\]
In the case $\kappa=0$, $H_{min}^1 + H_\infty^0 > \frac{1}{2}(H_\infty^1+H_\infty^0) = \gamma$ and
\[
\gamma + H_{min}^{1*} = \gamma + H_{min}^1 > H_\infty^1.
\]
The limit $\lambda_\beta^{N_\beta} \to 1$ is similarly proved. We are now able to compute an equivalent of $F_\beta^1(\lambda_\beta)$. We split as before the series in two parts: in the finite sum, we keep the indices corresponding to the incidences and the exponents $H_{min}^1$, the rest of the indices have an larger exponent $H_{min}^{1*}$ (unless $\kappa=0$ where we only use one exponent $H_{min}^1$). For $ \beta $ large
enough, we thus have
\begin{multline*}
(e^{-\epsilon} \kappa) \exp(-\beta H_{min}^1) + \frac{\exp(-\beta H_\infty^1 - \epsilon)}{\lambda_\beta^{N_\beta}(\lambda_\beta-1)} 
\leq F_\beta^1(\lambda_\beta) \\
F_\beta^1(\lambda_\beta) \leq \kappa \exp(-\beta H_{min}^1) + N_\beta \exp(-\beta H_{min}^{1*}) + \frac{\exp(-\beta H_\infty^1 + \epsilon)}{\lambda_\beta^{N_\beta}(\lambda_\beta-1)}.
\end{multline*}
Taking into account the estimate~\eqref{equation:selectionCase06}, for $\beta$ sufficiently large 
\begin{multline*}
\Big[ \kappa \exp(-\beta H_{min}^1) + \frac{\exp(-\beta H_\infty^1)}{\lambda_\beta-1} \Big] e^{-2\epsilon} 
\leq F_\beta^1(\lambda_\beta) \\
F_\beta^1(\lambda_\beta) \leq \Big[ \kappa \exp(-\beta H_{min}^1) +  \frac{\exp(-\beta H_\infty^1)}{\lambda_\beta-1} \Big] e^{2\epsilon},
\end{multline*}
Letting $\epsilon \to 0$, we have proved (in both cases, $\kappa>0$ or $\kappa=0$)
\begin{equation} \label{equation:selectionCase07}
F_\beta^1(\lambda_\beta) \sim  \kappa \exp(-\beta H_{min}^1) + \frac{\exp(-\beta H_\infty^1)}{\lambda_\beta-1}.
\end{equation}

{\it Part 4.} We show an equivalent of $\lambda_\beta-1$. The characteristic equation (item \ref{corollary:GibbsMeasureEstimates_1} of corollary \ref{corollary:GibbsMeasureEstimates}), the equivalents \eqref{equation:selectionCase03} and \eqref{equation:selectionCase07} give
\[
(\lambda_\beta -1)^2 \exp(\beta(H_\infty^1+H_\infty^0)) \sim  \kappa \, (\lambda_\beta-1) \exp(\beta(H_\infty^1+H_\infty^0)/2) + 1.
\]
(In the case $\kappa>0$, we use the equality $H_{min}^1+H_\infty^0 = \frac{1}{2} \big(H_\infty^1+H_\infty^0)$.) Let
$X_\beta = (\lambda_\beta-1) \exp(\beta(H_\infty^1+H_\infty^0)/2)$. Then $X_\beta^2 \sim \kappa X_\beta +1$.  Necessarily $X_\beta$ is bounded with respect to $\beta$, nonnegative, and any accumulation point $c$ satisfies $c^2 = \kappa c+1$. We have just proved that
\begin{equation} \label{equation:selectionCase08}
\lambda_\beta -1 \sim c \exp \Big(-\beta \frac{1}{2}(H_\infty^1+H_\infty^0) \Big).
\end{equation}
Using the previous equivalents~\eqref{equation:selectionCase03} and~\eqref{equation:selectionCase05} as well as the characteristic equation, 
one obtains the equivalents of $F_\beta^0(\lambda_\beta)$,  $\tilde F_\beta^0(\lambda_\beta)$ and $F_\beta^1(\lambda_\beta)$. For the equivalent of $\tilde F_\beta^1(\lambda_\beta)$, since $ 2 \gamma + H_{min}^1 = H_\infty^1 + H_{min}^1 + H_\infty^0 > H_\infty^1$,
one first notices that
\begin{equation} \label{equation:selectionCase09}
N_\beta^2 (\lambda_\beta-1)^2 \exp(-\beta H_{min}^1) \ll \exp(-\beta H_\infty^1).
\end{equation}
The series $\tilde F_\beta^1(\lambda_\beta)$ is then split in a more crude way
\begin{multline*}
\frac{(N_\beta(\lambda_\beta-1)+\lambda_\beta)\exp(-\beta H_\infty^1 - \epsilon)}{\lambda_\beta^{N_\beta}(\lambda_\beta-1)^2} 
\leq \tilde F_\beta^1(\lambda_\beta) \\
\tilde F_\beta^1(\lambda_\beta)  \leq  N^2_\beta \exp(-\beta H_{min}^{1}) + \frac{(N_\beta(\lambda_\beta-1)+\lambda_\beta)\exp(-\beta H_\infty^1 + \epsilon)}{\lambda_\beta^{N_\beta}(\lambda_\beta-1)^2},
\end{multline*}
and therefore
\begin{equation}
\tilde F_\beta^1(\lambda_\beta) \sim \frac{\exp(-\beta H_\infty^1)}{(\lambda_\beta-1)^2} \sim  \frac{1}{c^2} \exp(\beta H_\infty^0). 
\end{equation}
The proof of all the equivalents~\eqref{equation:selectionCase0} is now complete.
\end{proof}

\section{The nonselection case}
\label{section:nonSelectionCase}

We construct an example of Lipschitz double-well type potential  satisfying $H_\infty^0=H_\infty^1=0$ that produces a nonconvergent family of 
Gibbs measure as the temperature goes to zero. Notice that any symmetric example, $H_n^0=H_n^1$,  $\forall \, n\geq1$, provides a family of symmetric Gibbs measures $\{\mu_ \beta\}$ that converges to $\frac{1}{2}\delta_{0^\infty}+\frac{1}{2}\delta_{1^\infty}$. We show that the subclass of double-well type potentials  is rich enough to break  the symmetry in an alternated way. Notice also that $H$ is necessarily reduced in order to obtain the nonselection case.

The two fixed points $0^\infty$, $1^\infty$ are connected by two heteroclinic orbits, 
$\{0^n1^\infty\}_{n\geq1}$ and $\{1^n0^\infty\}_{n\geq 1}$. The oscillation between the two minimizing measures $\delta_{0^\infty}$ and $\delta_{1^\infty}$ are obtained by choosing a symmetric potential $H$, where both $\{H_n^0\}_{n\geq1}$ and $\{H_n^1\}_{n\geq1}$ are nonincreasing and converge to zero. The level sets of $H$ alternate as in figure \ref{figure:non_selection_case} and are chosen according to the following rules that are similar to the rules in \cite{VanEnterRusze2007l}.

\begin{figure}[hbt]
\centering
\includegraphics[width=0.98\textwidth]{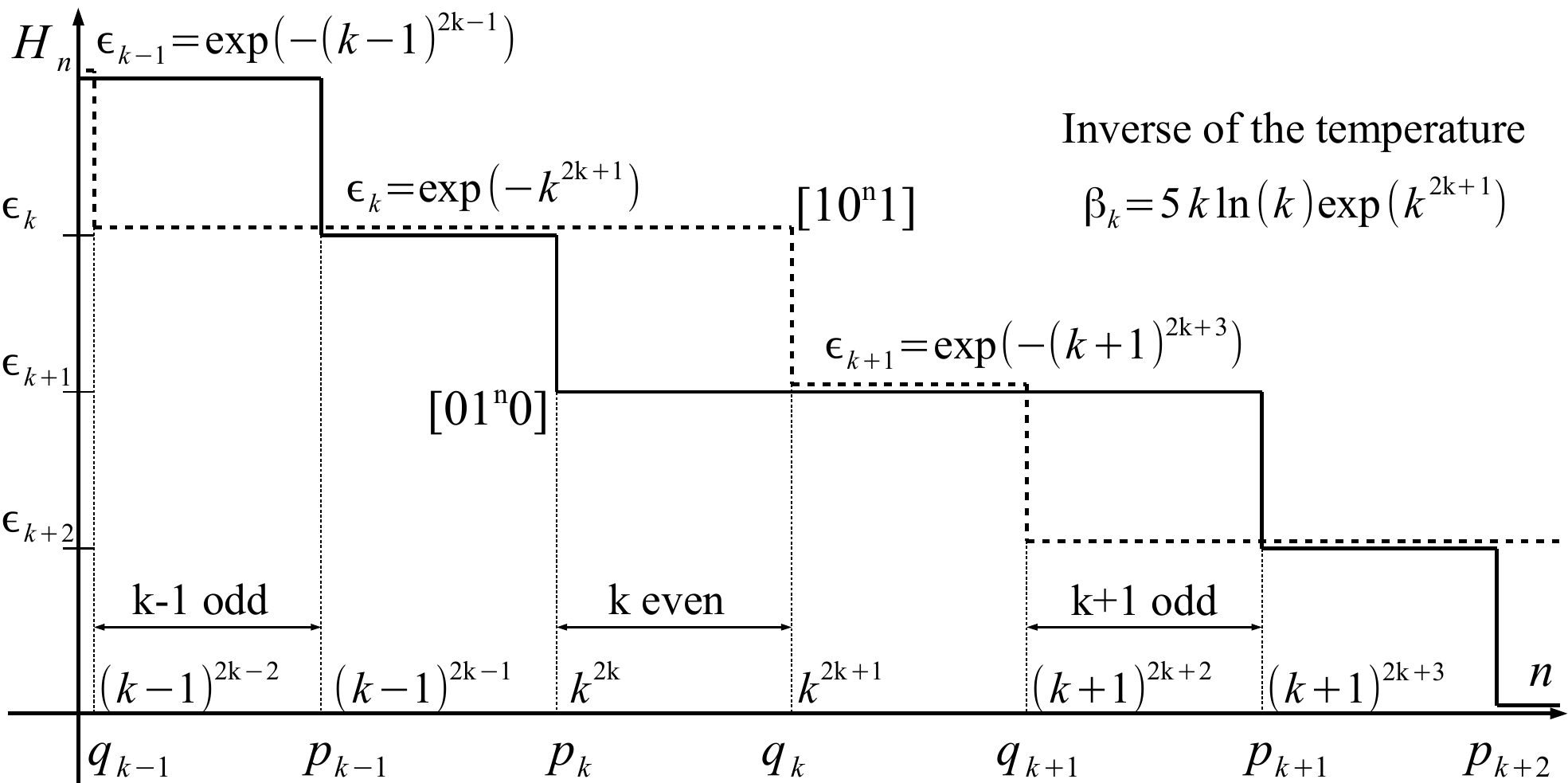}
\caption{\footnotesize{\textbf{The nonselection case for a Lipschitz example.}  The level sets satisfy $H = \epsilon_k = \exp(-k^{2k+1})$  on  $[01^n0]$ for every  $p_{k-1} < n \leq p_k$ and on  $[10^n1]$ for every  $q_{k-1} < n \leq q_k$. If $k$ is even, $p_k = k^{2k}$ and $q_k = k^{2k+1}$. If $k$ is odd, $p_k = k^{2k+1}$ and $q_k = k^{2k}$.}}
\label{figure:non_selection_case}
\end{figure}

\medskip
-- {\it Rule 1.} We choose two increasing sequences $\{p_k\}_{k\geq0}$ and $\{q_k\}_{k\geq0}$ which alternate according to the parity of the index $k$:
\begin{gather*}
1 \le p_0 < q_0< q_1 < p_1 < p_2 < q_2 < q_3  < p_3 < \ldots, \\
p_{2l} < q_{2l} < q_{2l+1} < p_{2l+1} < p_{2l+2} < q_{2l+2} < \ldots
\end{gather*}

-- {\it Rule 2.} We choose a decreasing sequence  $\{\epsilon_k\}_{k\geq0}$ of positive numbers which  goes to zero. We choose $H$ so that a level set of $H$ corresponds to a union of cylinders $[01^n0]$ (respectively $[10^n1]$)  over  $n \in \{p_{k-1}+1, \ldots,p_k\}$ (respectively over  $n \in \{q_{k-1}+1, \ldots, q_k\}$). By convention $p_{-1}=q_{-1}=0$, and
\begin{align*}
H_n^0 := \epsilon_k, \ \forall \, p_{k-1} < n \leq p_k, \qquad \ H_n^1 := \epsilon_k, \ \forall \, q_{k-1} < n \leq q_k.
\end{align*}
The contribution of the potential $H_n^0$ (respectively $H_n^1)$ exhibits a large drop at the level $p_k$ (respectively $q_k$): 
\begin{align*}
\forall \, n\leq p_k, \ H_n^0 \geq \epsilon_k, \quad \forall \, n \geq p_k+1, \ H_n^0 \leq \epsilon_{k+1}, \\
\forall \, n\leq q_k, \ H_n^1 \geq \epsilon_k, \quad \forall \, n \geq q_k+1, \ H_n^1 \leq \epsilon_{k+1}.
\end{align*}

-- {\it Rule 3.} We choose a decreasing sequence of temperatures $\beta_k^{-1}\to0$ which forces the Gibbs measure to give larger mass to either $[0]$ for an even index or $[1]$ for an odd index. The only constraints on $\{p_k\}$, $\{q_k\}$, $\{\epsilon_k\}$ and $\{\beta_k\}$ we use are:
\begin{gather*}
\lim_{k\to+\infty} p_k^2\exp(-\beta_k\epsilon_k)=0, \quad \lim_{k\to+\infty} q_k^2\exp(-\beta_k\epsilon_k)=0, \\
\lim_{k\to+\infty} \beta_k\epsilon_{k+1}=0, \quad 
\lim_{k\to+\infty} \frac{q_{2k}}{p_{2k}} = +\infty, \quad \lim_{k\to+\infty}    \frac{p_{2k+1}}{q_{2k+1}} = +\infty, \\
\sum_{k\geq1} (p_k-p_{k-1})\exp(-\epsilon_k) < + \infty, \quad \sum_{k\geq1} (q_k - q_{k-1})\exp(-\epsilon_k) < + \infty.
\end{gather*}
The last two conditions ensure the summability of the variation.

The three previous rules enable us to say that, at the temperature $\beta_k^{-1}$, for $k$ even or odd, the system is mainly governed by a system having a potential $\tilde H$ equal to zero on $[00] \cup [01^{p_k+1}] \cup [11] \cup [10^{q_k+1}]$ (thanks to  $\epsilon_{k+1} \ll \epsilon_{k}$), and positive elsewhere.

\begin{proof}[Proof of item \ref{item:mainTheorem_2} of theorem \ref{theorem:main}]
Let  $k$ be even. The other case is similar. To simplify the notations, we write $p=p_k$, $q=q_k$,  and $\lambda = \lambda_{\beta_k}$.  Remember the \emph{a priori} estimate $\lambda\leq 2$.

{\it Part 1.} We rewrite $F_\beta^0(\lambda)$ as if the energy $H_n^0$ where negligible for $n>p$. Then 
\begin{equation}  \label{equation:mainProoof_0}
 F_\beta^0(\lambda) = \frac{1}{\lambda^p(\lambda-1)} \big(\alpha_0 + \lambda^p(\lambda-1) \theta_0 \big),
\end{equation}
where
\[
\alpha_0 := \lambda^p(\lambda-1) \sum_{n\geq p+1} \frac{1}{\lambda^n} \exp(-\beta_k H_n^0), \ \text{ and } \ \theta_0 := \sum_{n=1}^p \frac{1}{\lambda^n} \exp(-\beta_k H_n^0).
\]
As $H_n^0 \leq \epsilon_{k+1}$ for $n \geq p+1$ and $H_n^0 \geq \epsilon_k$ for $n\leq p$, we obtain
\begin{gather*}
\exp(-\beta_k \epsilon_{k+1}) \leq \alpha_0 \leq 1, \quad \theta_0 \leq p \exp(-\beta_k \epsilon_k).
\end{gather*}
Rule 3 implies $\alpha_0 \to 1$ and $\theta_0 \to 0$ as $k\to +\infty$. Similarly
\begin{equation}  \label{equation:mainProoof_1/2}
 F_\beta^1(\lambda) = \frac{1}{\lambda^q(\lambda-1)} \big( \alpha_1 + \lambda^q(\lambda-1) \theta_1 \big),
\end{equation}  
with 
$$ \alpha_1 := \lambda^q(\lambda-1) \sum_{n\geq q+1} \frac{1}{\lambda^n} \exp(-\beta_k H_n^1), \ \text{ and } \
\theta_1 := \sum_{n=1}^q \frac{1}{\lambda^n} \exp(-\beta_k H_n^1). $$
As  $H_n^1 \leq \epsilon_{k+1}$ for $n \geq q+1$ and $H_n^1 \geq \epsilon_k$ for $n\leq q$, the third rule also implies   $\alpha_1 \to 1$ and $\theta_1 \to 0$ as $k\to +\infty$. As $F_\beta^0(\lambda)F_\beta^1(\lambda)=1$, we have
\begin{gather*}
\lambda^{p+q}(\lambda-1)^2 = \big[\alpha_0 + \lambda^p(\lambda-1) \theta_0 \big] \big[\alpha_1 + \lambda^q(\lambda-1) \theta_1 \big] := \delta^2.
\end{gather*}

{\it Part 2.} We show that  $\delta \to 1$ as $k\to+\infty$. Let  $N := \frac{p+q}{2}$.
We first observe that, for $k$ large enough,  $\lambda^N\geq e$. If not, 
\begin{equation} \label{equation:mainProoof_1}
\lambda - 1 \geq \delta e^{-1} \geq e^{-1}\sqrt{\alpha_0\alpha_1}.
\end{equation}
On the one side $\lambda -1 \to 0$, on the other side $\alpha_0\alpha_1 \to 1$; we get a contradiction. 
We next observe that $\lambda-1 \geq \frac{1}{N}$. Indeed
\begin{gather}
\lambda = 1 + \frac{\delta}{\lambda^N}, \quad \ln(\lambda) \leq \frac{\delta}{\lambda^N}, \quad 1 \leq N\ln(\lambda) \leq \frac{N\delta}{\lambda^N}, \quad \lambda^N \leq N\delta, \label{equation:mainProoof_2}
\end{gather}
and from the equation $\lambda^N(\lambda-1) = \delta$, we finally obtain $\lambda-1 \geq \frac{1}{N}$. 
We rewrite the two terms $\lambda^p(\lambda-1)$ and $\lambda^q(\lambda-1)$ as
\begin{align*}
\lambda^p(\lambda-1) &= (\lambda^N)^{p/N}(\lambda-1) = \big[ \lambda^N(\lambda-1) \big]^{p/N} (\lambda-1)^{1-p/N} \\
&=\delta^{p/N} (\lambda-1)^{(q-p)/(q+p)} \leq \delta^{p/N}, \\
\lambda^q(\lambda-1) &= (\lambda^N)^{q/N}(\lambda-1) = \big[ \lambda^N(\lambda-1) \big]^{q/N} (\lambda-1)^{1-q/N} \\
&= \delta^{q/N} (\lambda-1)^{-(q-p)/(q+p)} \leq \delta^{q/N} (\lambda-1)^{-1} \leq q \delta^{q/N}.
\end{align*}
Therefore, we have
\begin{align*}
\delta^2 &\leq \big[ \alpha_0 + \delta^{p/N} \theta_0 \big] \big[ \alpha_1 + q\delta^{q/N} \theta_1 \big] \\
&= \alpha_0\alpha_1 + \alpha_0\theta_1 q \delta^{q/N} + \alpha_1\theta_0\delta^{p/N} + \theta_0\theta_1 q \delta^2.
\end{align*}
Using $\delta^{p/N} \leq 1+ \delta^2$ and $\delta^{q/N} \leq 1+\delta^2$, we have
\[
\alpha_0 \alpha_1 \leq \delta^2 \leq 
\frac{\alpha_0 \alpha_1 + (\alpha_0 q \theta_1 + \alpha_1 \theta_0)}{1-(\alpha_0 q \theta_1 + \alpha_1 \theta_0 + \theta_0 q\theta_1)}.
\]
Since $q\theta_1 \leq q^2\exp(-\beta_k\epsilon_k) \to 0$ and $\theta_0 \to 0$ as $k\to+\infty$, we obtain $\delta\to1$. 

{\it Part 3.} We first prove that $q(\lambda-1) \to +\infty$. Since $N < q$, it is enough to show $N(\lambda-1) \to +\infty$. 
Indeed, for every $C \ge 1$,  for $k$ sufficiently large, $\lambda^N \geq \exp(C)$ as in \eqref{equation:mainProoof_1}. 
Using the same estimates as in \eqref{equation:mainProoof_2}, we have 
\[
C\lambda^{N} \leq N \delta \ \ \text{ and }  \ \  N(\lambda-1) \geq C.
\] 
Therefore, from the estimates of part 2, we see that 
\begin{align*}
&\frac{\lambda^p(\lambda-1)^2}{p(\lambda-1)+\lambda} \leq \frac{\lambda^p(\lambda-1)}{p} \leq \frac{\delta^{p/N}}{p} \leq \frac{1+\delta^2}{p} \to 0, \\
&\frac{\lambda^q(\lambda-1)^2}{q(\lambda-1)+\lambda} \leq \frac{\lambda^q(\lambda-1)}{q} \leq \frac{\delta^{q/N}}{q(\lambda-1)} \leq \frac{1+\delta^2}{q(\lambda-1)} \to 0.
\end{align*}

{\it Part 4.} We decompose $\tilde F_\beta^0(\lambda)$ as before
\begin{gather} \label{equation:mainProoof_4}
\tilde F_\beta^0(\lambda) = \frac{p(\lambda-1)+\lambda}{\lambda^p(\lambda-1)^2} \Big( \tilde \alpha_0 + \frac{\lambda^p(\lambda-1)^2}{p(\lambda-1)+\lambda} \tilde \theta_0 \Big),
\end{gather}
where 
\begin{gather*}
\exp(-\beta_k \epsilon_{k+1}) \leq \tilde \alpha_0 := \frac{\lambda^p(\lambda-1)^2}{p(\lambda-1)+\lambda} \sum_{n\geq p+1} \frac{n}{\lambda^n} \exp(-\beta_k H_n^0) \leq 1,  \\ 
\text{and}\quad \tilde\theta_0 := \sum_{n=1}^{p}  \frac{n}{\lambda^n} \exp(-\beta_k H_n^0)  \leq p^2 \exp(-\beta_k \epsilon_{k}).
\end{gather*}
Then $\tilde \alpha_0 \to 1$ and $\tilde\theta_0 \to 0$. Similar estimates are obtained for $\tilde F_\beta^1(\lambda)$.

{\it Part 5.} We may now conclude the proof. Since $\lambda^p(\lambda-1)/p \to 0$, $\lambda^q(\lambda-1)/q \to 0$, $p\theta_0 \to 0$
and $q\theta_1 \to 0$, equations~\eqref{equation:mainProoof_0} and~\eqref{equation:mainProoof_1/2} imply
\[
F_\beta^0(\lambda) \sim \frac{1}{\lambda^p(\lambda-1)} \quad \text{and} \quad F_\beta^1(\lambda) \sim \frac{1}{\lambda^q(\lambda-1)}.
\]
As $\lambda^p(\lambda-1)^2/(p(\lambda-1)+\lambda) \to 0$ and $\lambda^q(\lambda-1)^2/(q(\lambda-1)+\lambda) \to 0$,  
equation~\eqref{equation:mainProoof_4} and a similar expression for $ \tilde F_\beta^1(\lambda) $ provide
\[
\tilde F_\beta^0(\lambda) \sim \frac{p(\lambda-1)+\lambda}{\lambda^p(\lambda-1)^2} \quad\text{and} \quad \tilde F_\beta^1(\lambda) \sim \frac{q(\lambda-1)+\lambda}{\lambda^q(\lambda-1)^2}.
\]
Item \ref{corollary:estimatesForGibbsMeasure_5} of Corollary \ref{corollary:estimatesForGibbsMeasure} thus gives
\begin{gather*}
\frac{\mu_\beta[0]}{\mu_\beta[1]} = \frac{F_\beta^0(\lambda)}{F_\beta^1(\lambda)} \frac{\tilde F_\beta^1(\lambda)}{\tilde F_\beta^0(\lambda)} \sim \frac{q(\lambda-1)+\lambda}{p(\lambda-1)+\lambda} \geq  \min \Big\{ \frac{q}{2p}, \frac{q(\lambda-1)}{2\lambda} \Big\} \to +\infty.
\end{gather*}
As a matter of fact, rule 3 asks $\lim_{l\to+\infty} \frac{q_{2l}}{p_{2l}} = +\infty$.
Hence, $\mu_{\beta_{2l}} \to \delta_{0^\infty}$.
\end{proof}

\footnotesize

\footnotesize
\end{document}